\def\cF{\mathcal{F}} 
\def\cL{\mathcal{L}} 
\def\Nat{\mathbb{N}} 
\def\Real{\mathbb{R}} 
\def\Acc{\Gamma} 
\def\Poly{\mathcal{S}} 
\def\Mat{\mathcal{M}} 
\def\flag{\mathcal{F}} 
\def\field{\mathbb{F}}
\def\kos{\mathbb{K}}
\def\spcs{\mathcal{V}} 
\def\cl{\mathrm{cl}}
\def\cip{\texttt{CI}}
\def\gep{\texttt{GE}}
\def\dlp{\texttt{DL}} 
\def\akp{\texttt{AK}}
\def\clp{\texttt{CL}} 
\def\sss{\smallsetminus}
 \newcommand\fullv[1]{#1}
 \newcommand\confv[1]{}
\theoremstyle{plain}
\newtheorem{proposition}{Proposition}[section]
\newtheorem{theorem}[proposition]{Theorem}
\newtheorem{lemma}[proposition]{Lemma}
\theoremstyle{definition}
\newtheorem{definition}[proposition]{Definition}
\newtheorem{example}[proposition]{Example}
\newtheorem{lpp}[proposition]{Linear Programming Problem}
\let\oldlt\longtable
\let\endoldlt\endlongtable
\def\longtable{\@ifnextchar[\longtable@i \longtable@ii}
\def\longtable@i[#1]{\begin{figure}[t]
\onecolumn
\begin{minipage}{0.5\textwidth}
\oldlt[#1]
}
\def\longtable@ii{\begin{figure}[t]
\onecolumn
\begin{minipage}{0.5\textwidth}
\oldlt
}
\def\endlongtable{\endoldlt
\end{minipage}
\twocolumn
\end{figure}}
\title{Optimizing Extension Techniques for Discovering Non-Algebraic Matroids}
\author{Michael Bamiloshin}
\author{Oriol Farr\`as}
\affil{Universitat Rovira i Virgili, Tarragona, Spain}
\begin{document}
\maketitle

\begin{abstract}
         In this work, we revisit some combinatorial and information-theoretic extension techniques for detecting non-algebraic matroids. These are the Dress-Lov\'asz and Ahlswede-K\"orner extension properties. We provide optimizations of these techniques to reduce their computational complexity, finding new non-algebraic matroids on 9 and 10 points. In addition, we use the Ahlswede-K\"orner extension property to find better lower bounds on the information ratio of secret sharing schemes for ports of non-algebraic matroids. 
\end{abstract}
\makeatletter{\renewcommand*{\@makefnmark}{}
	\footnotetext{
\fullv{This work is the full version of \cite{BaFa24}.} The authors are supported by the project HERMES, funded by INCIBE and by the European Union NextGeneration EU/PRTR, and the project ACITHEC PID2021-124928NB-I00, MCIN/AEI/10.13039/ 501100011033/FEDER, EU. Additionally, Oriol Farr\`as is supported by grant 2021 SGR 00115 from the Government of Catalonia. 
    }
 \makeatother}
\section{Introduction}
\label{sec: intro}

The characterization of matroids that admit a linear representation over a field is a natural problem that was formulated in the early stages of matroid theory. This notion of linear representation can be extended to algebraic representation over field extensions, considering the rank function determined by the transcendence degree instead of the linear dimension. Matroids that admit such a representation are said to be algebraic, and the characterization of these matroids is the main objective of this work. In addition to linear and algebraic, other kinds of matroid representations, such as multilinear (or folded-linear) and entropic (or by partitions), have also been studied. It is known that linearly representable matroids are multilinear, and multilinear matroids are entropic. The class of algebraic matroids contains the class of linear matroids, but it does not contain the class of multilinear matroids~\cite{Ben16}. And the class of entropic matroids does not contain the class of algebraic matroids~\cite{Matus99}.

The study of linearly representable matroids has attracted a lot of interest and has applications in different areas, such as information theory, cryptography (secret sharing schemes), and coding theory (network coding). See~\cite{BLP08,BrDa91,DFZ07,SHL08}, for example. \fullv{Recently, a connection between algebraic matroids and secret sharing schemes was presented in \cite{BFM25}.}Results on the other matroid representation classes mentioned above have also been applied in these areas~\cite{Ben16, ESG10, Matus99, Sey92, SiAs98}. 

Ingleton and Main presented in~\cite{InMa75} a necessary condition for a matroid to be algebraic: in a \emph{full} algebraic matroid of rank at least 4, if there are three pairwise but not all coplanar lines, then all three lines have a common intersection. This result was later generalized by Lindstr{\"o}m~\cite{Lindstrom1988}.

The rank function of a full linear matroid is modular; but it is not necessarily so for full algebraic matroids. Nevertheless, a similar combinatorial property to modularity was shown for full algebraic matroids by Dress and Lov\'asz~\cite{DrLo87}: for every pair of flats in a full algebraic matroid, there exists a flat called the \emph{quasi-intersection} (denoted \emph{pseudo-intersection} in~\cite{BjLo86}) that simulates their intersection. The Ingleton-Main lemma and its generalizations can also be viewed as extension properties of algebraic matroids, similar to the Euclidean and generalized Euclidean intersection properties of linear matroids~\cite{BaWa89, Bam21, BFP23}. 

In~\cite{Bollen18}, Bollen did extensive work on the problem of matroid algebraicity. Using Frobenius flocks, he found some matroids on 9 points that are not algebraic over fields of characteristic 2; and using a recursive implementation of the Ingleton-Main lemma, he was able to discover many matroids on 9 points that are not algebraic over any field.
The Ingleton-Main lemma loses its efficacy when applied to sparse paving matroids with rank greater than 4, as such matroids will always satisfy the property. Also, some matroids might be Frobenius-flock representable and still not be algebraic.

While the aforementioned techniques emanate from matroid theory works, other techniques coming from information theory can also be applied to the problem of matroid classification.
These include the common information ($\cip$), the Ahlswede-K\"orner ($\akp$), and the copy lemma ($\clp$) properties. 

$\cip$ is an extension property of linear polymatroids that is used to show that a matroid does not admit a linear representation. It was used (sometimes, implicitly) in many works studying the classification of linearly representable matroids, e.g.~\cite{BBFP21,Ing71,MaRo08}.

$\akp$ is a property of almost entropic polymatroids, and was used to find non-Shannon information inequalities. Since algebraic matroids are almost entropic~\cite{Matus24}, we bring $\akp$ to the core of techniques for discovering non-algebraic matroids. 

The direct application of both the $\akp$ and $\cip$ techniques to the linear programming problems for finding lower bounds on the information ratio of secret sharing schemes was introduced in~\cite{FKMP20}. Later, $\clp$ was also applied to this problem~\cite{GuRo19}, and improved lower bounds were found in~\cite{Bam21,BBFP21,Gurpinar22}. 

\subsection{Our Contributions}

In this work, we revisit some combinatorial and information-theoretic tools for detecting matroids that do not admit an algebraic representation. We provide optimizations for techniques based on the Dress-Lov\'asz and Ahlswede-K\"orner extension properties. 
Similar to~\cite{Bam21,BBFP21,Bollen18}, we show recursive applications of these techniques. Finding new results on matroid extensions, we are able to reduce the computational cost of using these techniques.

Applying these optimized techniques, we find new non-algebraic and non-almost entropic matroids on 9 and 10 points. We continue the classification work of algebraic matroids of Bollen~\cite{Bollen18}, completing the classification of $(4,9)$ matroids (i.e., matroids with rank 4 on 9 points) that do not satisfy the Dress-Lov\'asz property at recursive depths smaller than 8. 
For $(5,9)$ matroids, we show some smallest sparse paving matroids that satisfy the Ingleton inequality, have rank greater than 4, and are not almost entropic. These particular matroids would not have been found using Frobenius flocks, as they are Frobenius-flocks representable, nor using the Ingleton-Main lemma due to the fact that they are sparse paving. Additionally, we show an identically self-dual, sparse paving rank-5 matroid on 10 points that is not almost entropic. This matroid has both the Tic-Tac-Toe matroid and its dual as minors.

Another contribution of this work is an improvement on the linear programming technique for finding lower bounds on the information ratio of secret sharing schemes presented in~\cite{Csi97,FKMP20}. We define an LP problem that uses a more restrictive property of almost entropic polymatroids~\cite{BFP23}, which is also a consequence of the $\akp$ lemma. Using this approach, we obtain improved lower bounds for ports of matroids that are not algebraic.

For the interested reader, the programs used in this paper are available at \href{https://github.com/bmilosh/algebraic-matroids-extensions}{https://github.com/bmilosh/algebraic-matroids-extensions}.

\subsection{Organization}

The organization of the rest of this paper is the following. In Section~\ref{sec: mats and polys}, we introduce the notations we use in this work. In Section~\ref{subs: reps}, we talk about relationships between different ways of representing a matroid. We introduce the extension properties we are focused on in Section~\ref{sec:AlgbComb}. We encounter the first set of optimizations in this work in Section~\ref{sec:PropsOfAKPolys}. The next set of optimizations come in Sections~\ref{sub: AK opts} and~\ref{sub: DL opts}. We present some non-algebraic matroids we found in Section~\ref{sec: exps}, and finish with some results on secret sharing in Section~\ref{sec: sss results}. 

\section{Matroids and Polymatroids}
\label{sec: mats and polys}

We refer the reader to~\cite{Oxley11} for an in-depth discussion of matroids, and to~\cite{BaWa89, Bam21, BFP23} for matroid extension properties and techniques. 

\begin{definition}\label{df:polym1}
	Given a finite set $Q$ and a function
	$f\colon\mathcal{P}(Q)\rightarrow \Real$, 
	the pair $(Q,f)$ is called a \textit{polymatroid} if
	the following properties are satisfied for all $X,Y\subseteq Q$.
	\begin{description}
		\item[(P1)] $f(\emptyset) = 0$.
		\item[(P2)]{\label{P2}} $f(X) \leq f(Y)$ if $X \subseteq Y$.
		\item[(P3)]{\label{P3}} $f(X \cap Y) + f(X \cup Y) \le f(X) + f(Y)$.
	\end{description}
	The set $Q$ and the function $f$ are, respectively,
	the \textit{ground set} and the \textit{rank function} of the polymatroid.
	The rank function of an \emph{integer} polymatroid only takes integer values.
	A \emph{matroid} is an integer polymatroid  $(Q,r)$ 
	such that $r(X) \le |X|$ for every $X \subseteq Q$. For compactness, given any sets $X,Y\subseteq Q$, we write the union $X\cup Y$ as $XY$, $r(X;Y)$ to denote $r(X) + r(Y) - r(XY)$, and $r(X|Y)$ for $r(XY)-r(Y)$.
\end{definition}

For a polymatroid $\Poly = (Q,f)$ and a set $B \subseteq Q$,
the \emph{deletion} $\Poly \setminus B$ of $B$ from $\Poly$
is the polymatroid $(Q \sss B, \widehat{f})$
with $\widehat{f}(X) = f(X)$ for every $X \subseteq Q \sss B$,
while the \emph{contraction} $\Poly/B = (Q \sss B, \widetilde{f})$ of $B$ from $\Poly$ is such that $\widetilde{f}(X) = f(XB) - f(B)$ for every $X \subseteq Q \sss B$. 
Every polymatroid that is obtained from $\Poly$ by 
applying deletions and contractions is called a \emph{minor} of $\Poly$.
Finally, observe that minors of matroids are matroids.

Consider sets $Q,Z$ with $Q\cap Z=\emptyset$. A polymatroid $(QZ,g)$ is an \emph{extension} of
a polymatroid $(Q,f)$ if they satisfy that $g(X)=f(X)$ for every $X\subseteq Q$.

Let $\Mat = (Q,r)$ be a matroid. 
The \emph{independent sets} of $\Mat$ 
are the sets $X \subseteq Q$ with $r(X) = |X|$.
Every subset of an independent set is independent. 
The \emph{bases} of $\Mat$ are its maximal independent sets, and its minimal dependent sets are called \emph{circuits}.
All bases have the same number of elements,
which equals $r(Q)$, the \emph{rank} of the matroid.
A set $X \subseteq Q$ is a \emph{flat} of $\Mat$
if $r(Xx) > r(X)$ for every $x \in Q \sss X$. 
The flats with rank $r(Q)-1$ are called \emph{hyperplanes}. Flats $X,Y\subseteq Q$ are \emph{modular} if $r(X) + r(Y) = r(XY) + r(X\cap Y)$, and \emph{nonmodular} otherwise. 
In addition to the one given in Definition~\ref{df:polym1},
there are other equivalent sets of axioms characterizing matroids
which are stated in terms of the properties
of the independent sets, the circuits, 
the bases, or the hyperplanes.

In a \emph{simple} matroid,
all sets with one or two elements are independent.
A matroid of rank $k$ is \emph{paving}
if the rank of every circuit is either $k$ or $k - 1$.
It is \emph{sparse paving} if,
in addition, all circuits of rank $k - 1$ are flats,
which are called \emph{circuit-hyperplanes}.
The \emph{dual} of $\Mat = (Q,r)$ 
is the matroid $\Mat^* = (Q,r^*)$ with
$r^*(X) = |X| - r(Q) + r(Q \sss X)$
for every $X \subseteq Q$.
Equivalently, $\Mat^*$ is the matroid
on $Q$ whose bases are the complements of the bases of $\Mat$. 


\begin{definition}\label{def:modular cuts}
	Given a matroid $\Mat=(Q,r)$, a \emph{modular cut} $\mathcal{F}$ of $\Mat$ is a family of flats of $\Mat$ satisfying the following properties:
	\begin{enumerate}
		\item For every $F_1\in\mathcal{F}$ and for every flat $F_2$ such that $F_1\subseteq F_2$, $F_2\in\mathcal{F}$, i.e., $\mathcal{F}$ is monotone increasing.
		\item For every modular pair $F_1,F_2\in\mathcal{F}$, $F_1\cap F_2\in\mathcal{F}$, i.e., $\mathcal{F}$ is closed under intersection of modular pairs. 
	\end{enumerate}
\end{definition}

Every proper point extension of a matroid (i.e., an extension by a rank-1 element) corresponds to a modular cut and vice versa~\cite[Section 7.2]{Oxley11}. The modular cut \emph{generated} by the flats $\{F_1,F_2,\ldots,F_k \}$, for some $k>0$ is simply the smallest modular cut that contains these flats. In general, we denote the modular cut generated by a flat $X$ as
    $ \mathcal{F}_X = \{F\subseteq Q:X\subseteq F\text{ and $F$ is a flat of }\Mat \}.$ 

\subsection{Matroid Representations}
\label{subs: reps}

\begin{definition}
A matroid $\Mat=(Q,r)$ is \emph{$\ell$-linearly representable} over a field $\field$ for some $\ell\in\Nat$ if there exists a vector space $\spcs$ and a vector subspace collection $(V_x)_{x\in Q}$ defined over $\field$ with $V_x\subseteq \spcs$ such that 
	$$\dim\left(\sum_{x\in A}V_{x}\right)=\ell\cdot r(A)\text{ for every }A\subseteq Q.$$
\end{definition}
If $\ell=1$, then $\Mat$ is simply said to be linearly representable. Matroids that are $\ell$-linearly representable for $\ell>1$ are said to be \emph{multilinear} (or \emph{folded-linear}) matroids. While all linear matroids are multilinear, the reverse is not true. Examples of multilinear matroids that are not linear were shown in, e.g.,~\cite{PeVZ13,SiAs98}.

Consider an extension $\kos$ of $\field$. An element $x$ of $\kos$ is said to be \emph{algebraic} over $\field$ if it is the root of some non-trivial polynomial in $\field$. Otherwise it is \emph{transcendental}. Given a subset $X\subseteq\kos$, an element $y$ of $\kos$ is \emph{algebraically independent} with respect to $X$ if it is transcendental over the field $\field(X)$. A subset $X\subseteq\kos$ is \emph{algebraically independent} over $\field$ if every element $x\in X$ is algebraically independent with respect to the set $X\setminus x$ (i.e., no element $x\in X$ is the root of some non-trivial polynomial in $\field(X\setminus x)$), and \emph{algebraically dependent} otherwise. The \emph{transcendence degree} of $\kos$ over $\field$ is the size of the largest algebraically independent subset of $\kos$ over $\field$.

\begin{definition}
    A matroid $\Mat=(Q,r)$ is \emph{algebraically representable} over a field $\field$ if there exist an extension $\kos$ of $\field$ and a sequence of elements $(e_i)_{i\in Q}\subseteq \kos$ such that, for every $A\subseteq Q$, $$r(A)=\deg_{tr}\field((e_i)_{i\in A}),$$ where $\field((e_i)_{i\in A})$ is the smallest subfield of $\kos$ containing $\field$ and $(e_i)_{i\in A}$, and $\deg_{tr}$ is the transcendence degree of $\field((e_i)_{i\in A})$ over $\field$.
\end{definition}    
    

    A matroid $\Mat$ whose ground set $Q$ consists of all elements of $\kos$ and is such that $F\subseteq Q$ is a flat of $\Mat$ if and only if $\field((e_i)_{i\in F})$ is algebraically closed is called a \emph{full algebraic} matroid.

Fujishige~\cite{Fuj78} observed that, given a set $Q=\{1,\ldots,n \}$ with an associated set of random variables $\{S_1,\ldots,S_n\}$, the entropy function $h:2^Q\rightarrow\Real_{\geq0}$ on this set expressed as $$h(A)=H(S_A)$$ for every $A\subseteq Q$ such that $A\neq\emptyset$, $h(\emptyset)=0$, and $H(S_A)$ is the Shannon entropy of the random variables indexed by $A$, defines the rank function of a polymatroid. Such a polymatroid is called an \emph{entropic polymatroid}. A matroid is said to be \emph{almost entropic} if it is the limit of a sequence of entropic polymatroids, and \emph{entropic} if its rank function is a multiple of the rank function of an entropic polymatroid.

All linear matroids are algebraic, but the converse is not true. Matroids with less than 8 points are linear, and therefore algebraic, with the V\'amos matroid being the first matroid shown to be non-algebraic~\cite{InMa75}. In the other direction, the non-Pappus matroid is an example of an algebraic matroid that is not linear~\cite{Lindstrom83}. Furthermore, not every multilinear matroid is algebraic~\cite{Ben16}, and not every algebraic matroid is multilinear~\cite{Matus99}. Since every multilinear matroid is entropic, then there are entropic matroids that are not algebraic. All algebraic matroids are almost entropic~\cite{Matus24}, but not all almost entropic matroids are algebraic (nor entropic)~\cite[Remarks 4 \& 5]{Matus18}. For a visual depiction of these relationships, see~\cite[Figure 1]{BBFP21}.

\section{Extension Properties of Algebraic Matroids}
\label{sec:AlgbComb}
In this section, we present the techniques we use to find non-algebraic matroids. Our techniques are based on properties satisfied by algebraic matroids: Ahlswede-K\"orner extensions, and Dress-Lov\'asz extensions.

\subsection{Dress-Lov\'asz Extensions}
\label{sec:DLExts}
Every algebraic matroid can be embedded in its full algebraic matroid, and the same holds for linear matroids. However, unlike in the case of full linear matroids where all pairs of flats are modular, pairs of flats of full algebraic matroids are not necessarily modular. Instead, Dress and Lov\'asz~\cite{DrLo87} showed that pairs of flats of full algebraic matroids have what they called a quasi-intersection.

\begin{theorem}\cite[Theorem 1.5]{DrLo87}\label{thm:DrLo87}
    Let $\Mat=(Q,r)$ be a full algebraic matroid. Then for every pair of flats $X,Y\subseteq Q$ of $\Mat$, there exists a flat $T\subseteq X$ such that, for every flat $X'$ contained in $X$, 
    \[ T\subseteq X'\, \text{ if and only if }\, r(Y|X')=r(Y|X). \]
    Moreover, $X$ and $Y$ are modular if and only if $T=X\cap Y$.
\end{theorem}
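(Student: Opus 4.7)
The plan is to define the family
$$\mathcal{S}=\{\,X'\subseteq X\lon X'\text{ is a flat of }\Mat\text{ and }r(Y|X')=r(Y|X)\,\}$$
of flats in $X$ that preserve the conditional rank of $Y$, and to show that $\mathcal{S}$ has a (unique) minimum element $T$. Clearly $X\in\mathcal{S}$, so $\mathcal{S}$ is nonempty. A direct application of submodularity (P3) gives $r(Y|X_1)\geq r(Y|X_2)$ whenever $X_1\subseteq X_2\subseteq X$ are flats, so $\mathcal{S}$ is upward-closed inside the lattice of flats contained in $X$. Hence, once we know that $\mathcal{S}$ is closed under (flat) intersection, the element $T:=\bigcap_{X'\in\mathcal{S}}X'$ lies in $\mathcal{S}$ and is its minimum, which immediately yields the stated equivalence: $T\subseteq X'$ iff $X'\in\mathcal{S}$ iff $r(Y|X')=r(Y|X)$.

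The crux, and main obstacle, is proving that $\mathcal{S}$ is closed under intersection, and this is exactly where the hypothesis that $\Mat$ is \emph{full algebraic} enters. Fixing an algebraic representation of $\Mat$ inside a field extension $\kos/\field$, each flat $F$ of $\Mat$ corresponds to the relatively algebraically closed subfield $\kos_F\subseteq\kos$ generated by $\field$ together with the elements of $F$, and the rank becomes the transcendence degree, i.e.\ $r(Y|F)=\deg_{tr}(\kos_F(Y)/\kos_F)$. Intersection of relatively algebraically closed subfields is again relatively algebraically closed and matches $\kos_{X_1\cap X_2}=\kos_{X_1}\cap\kos_{X_2}$. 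The essential algebraic lemma, due in substance to Dress and Lov\'asz, is that for full algebraic matroids one can choose a separating transcendence basis of $\kos_X(Y)/\kos_{X_1\cap X_2}$ that is compatible with the tower $\kos_{X_1\cap X_2}\subseteq\kos_{X_i}\subseteq\kos_X$; once such a basis is available, transcendence degrees along the tower add correctly, so the equalities $r(Y|X_1)=r(Y|X_2)=r(Y|X)$ force $r(Y|X_1\cap X_2)=r(Y|X)$. Establishing this compatibility, which relies on the algebraic closedness of intermediate subfields together with a careful selection of derivations (or, in characteristic zero, K\"ahler differentials) respecting the flag of subfields, is the delicate technical step.

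Granted that $T\in\mathcal{S}$, the inclusion $X\cap Y\subseteq T$ follows without any modularity hypothesis. If some $y\in(X\cap Y)\sss T$ existed, then $T\cup\{y\}$ would be a subset of $X$ with $r(T\cup\{y\})=r(T)+1$ (because $T$ is a flat and $y\notin T$) and $r(YT\cup\{y\})=r(YT)$ (because $y\in Y$), so
$$r\bigl(Y\bigm|\cl(T\cup\{y\})\bigr)=r(Y|T\cup\{y\})=r(Y|T)-1=r(Y|X)-1.$$
Since $\cl(T\cup\{y\})$ is a flat contained in $X$, the upward closedness of $\mathcal{S}$ would force $r(Y|\cl(T\cup\{y\}))\geq r(Y|X)$, a contradiction.

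Finally, the moreover is purely formal. Modularity of $X,Y$ is equivalent to $r(X)+r(Y)=r(XY)+r(X\cap Y)$, i.e.\ $r(Y|X\cap Y)=r(Y|X)$, i.e.\ $X\cap Y\in\mathcal{S}$. If $X,Y$ are modular, then $T\subseteq X\cap Y$ by minimality of $T$, and combined with the reverse inclusion from the previous paragraph one gets $T=X\cap Y$. Conversely, if $T=X\cap Y$, then $X\cap Y\in\mathcal{S}$ by definition of $T$, which rewrites as modularity of $X$ and $Y$.
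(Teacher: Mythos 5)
The paper does not prove this theorem; it is cited verbatim as Theorem~1.5 of Dress and Lov\'asz~\cite{DrLo87} and no proof is given in the text, so there is no in-paper argument to compare yours against.

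Evaluated on its own, your proposal correctly isolates the combinatorial skeleton. With $\mathcal{S}$ the family of flats $X'\subseteq X$ with $r(Y|X')=r(Y|X)$, submodularity does give $r(Y|X_1)\ge r(Y|X_2)$ for flats $X_1\subseteq X_2\subseteq X$, so $\mathcal{S}$ is nonempty and upward-closed; if $\mathcal{S}$ is closed under intersection of flats it has a unique minimum $T$ and the equivalence follows; your derivation of $X\cap Y\subseteq T$ from $T\in\mathcal{S}$ is correct (though what you invoke is really the inequality $r(Y|X')\ge r(Y|X)$ for flats $X'\subseteq X$, not ``upward closedness''); and rewriting modularity as $X\cap Y\in\mathcal{S}$ settles the ``moreover''. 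The genuine gap is the claim that $\mathcal{S}$ is closed under intersection, which carries the entire algebraic content of the theorem, and what you write there is not yet an argument. The additivity of transcendence degree along a tower $K\subseteq M\subseteq L$ holds for arbitrary field extensions and requires no separating transcendence basis, so ``transcendence degrees along the tower add correctly'' is true but does nothing by itself; in particular it does not convert $r(Y|X_1)=r(Y|X_2)=r(Y|X)$ into $r(Y|X_1\cap X_2)=r(Y|X)$. What has to be controlled is how the transcendence degree of $Y$ behaves under \emph{intersection} of relatively algebraically closed subfields, and this interchange is precisely what can fail in general and precisely what the derivation-theoretic machinery of Dress and Lov\'asz is built to establish. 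As written, your crux is a pointer to their paper rather than a proof step; the proposal would need to import and actually verify the specific lemma from~\cite{DrLo87} (or reconstruct the derivation argument) to close the gap.
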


One can deduce a necessary condition for a matroid to be algebraic from the Dress-Lov\'asz result as follows. If $\Mat$ is an algebraic matroid for which for some pair of flats ($X,Y$) there is no $T\subseteq X$ satisfying Theorem~\ref{thm:DrLo87}, then $\Mat$ admits a series of proper point extensions in which this flat $T$ exists. 

\begin{definition}\label{def:QI}
    Let $\Mat=(Q,r)$ be a matroid and let $X,Y\subseteq Q$ be a nonmodular pair of flats of $\Mat$. The \emph{quasi-intersection of $(X,Y)$} is a flat $T\subseteq Q$ satisfying the following conditions
    \begin{description}
        \item[(DL1)] $r(T|X)=0$, and
        \item[(DL2)] $r(T|X')=0$ iff $r(Y|X')=r(Y|X)$ for every flat $X'\subseteq X$.
    \end{description}
    Note that the quasi-intersections of $(X,Y)$ and $(Y,X)$ are not necessarily the same.
\end{definition}

\begin{lemma}\label{lem:QI-Unique}
    For a pair of flats $(X,Y)$ in a matroid $(Q,r)$, if the quasi-intersection exists, it is unique.
\end{lemma}
\begin{proof}
    Suppose $T_1,T_2\subseteq Q$ are two quasi-intersections of $(X,Y)$. Then by (DL2), we have $r(T_1|T_2)=0=r(T_2|T_1)$ and therefore, $r(T_1)=r(T_1T_2)=r(T_2)$. Since $T_1$ and $T_2$ are flats, it implies that $T_1=T_2$.
\end{proof}

\begin{definition}\label{def:DLExt}
    Let $\Mat=(Q,r)$ be a matroid and let $X,Y\subseteq Q$ be a nonmodular pair of flats of $\Mat$. A matroid $(QZ,r)$ is a \emph{Dress-Lov\'asz ($\dlp$) extension} of $\Mat$ for $(X,Y)$ if there exists a set $T\subseteq QZ$ that is the quasi-intersection of $(X,Y)$. 
\end{definition}

\begin{definition}\label{def:DLProp}
    A matroid $\Mat=(Q,r)$ satisfies the \emph{Dress-Lov\'asz property ($\dlp$)} if for every nonmodular pair of flats $(X,Y)$ of $\Mat$ there is a $\dlp$-extension. 
\end{definition}

Like the matroid extension properties studied in~\cite{Bam21,BFP23}, we can also give a recursive definition to the Dress-Lov\'asz extension property as follows: 

\begin{definition}
    A matroid $\Mat$ is \emph{1-$\dlp$} if for every pair of flats $X,Y$, there is a $\dlp$-extension. It is \emph{$k$-$\dlp$} for some $k>1$ if for every pair of flats $X,Y$, there is a $\dlp$-extension that is $(k-1)$-$\dlp$. Algebraic matroids are $k$-$\dlp$ for every $k\geq1$ due to~\cite{Matus24}. 
\end{definition}

\subsection{Ahlswede-K\"orner Lemma}
\label{sec: AK lemma}

The Ahlswede-K\"orner lemma describes a property of pairs of information sources~\cite{AhKo77,AhKo06} that also holds for almost entropic polymatroids. 
We use it to find non-algebraic matroids, as in~\cite{BBFP21}, because algebraic matroids are almost entropic~\cite{Matus24}.

In this work, we use a stronger statement of this property that was proved in~\cite[Proposition 3.14]{FKMP20}. Instead of dealing with triples of sets, the $\akp$ property can still be determined using pairs of sets with extra conditions. The following is a formalization of that result and was recently defined in~\cite{BFP23}.

\begin{definition}\label{def:NewAK}
For a polymatroid $(Q,f)$ and sets 
$X, Y \subseteq Q$,
an extension $(Q Z, g)$ of $(Q,f)$ is 
an \emph{Ahlswede-K\"orner extension},
or \emph{{\akp} extension}, for the pair 
$(X, Y)$ if the following conditions are satisfied:
\begin{description}

\item[(AK1)]
$g(Z|X) = 0$, and
\item[(AK2)]
$g(X' | Z)=g(X' |Y)$ 
for every $X' \subseteq X$.
    
\end{description}
Following~\cite{BBFP21}, we call $Z$ the \emph{$\akp$-information} of $(X,Y)$ and denote it $\akp(X,Y)$. We say that a polymatroid satisfies the {\akp} property if for every $(X, Y)$ there exists an {\akp} extension.
\end{definition}
Note that it can sometimes happen that there is a set $Z\subseteq X$ that satisfies the listed conditions. In such a case, by an abuse of notation, we still call such a set an $\akp$ information of $(X,Y)$. The following result shows when this situation may arise.

\begin{lemma}\label{lem: Z iff ModPair}
    Let $\Mat=(Q,r)$ be a matroid, and let $X,Y\subseteq Q$ be flats. There exists a set $Z\subset Q$ that satisfies (AK1) and (AK2) for $(X,Y)$ if and only if $(X,Y)$ is a modular pair.
\end{lemma}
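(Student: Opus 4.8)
The plan is to prove both directions directly from the definitions, using the fact that for a matroid rank function conditional rank $r(A\mid B) = r(AB) - r(B)$ is nonnegative and submodular, and that a set contained in a flat behaves nicely under closure. Throughout, "set $Z \subseteq Q$" means we are looking for an $\akp$-information inside the original ground set, so no genuine extension is added.

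\smallskip
\noindent\textbf{($\Leftarrow$)} Suppose $(X,Y)$ is a modular pair, so $r(X) + r(Y) = r(XY) + r(X\cap Y)$. I claim $Z := X \cap Y$ works. Since $X \cap Y \subseteq X$, we have $r(Z \mid X) = r(X) - r(X) = 0$, giving (AK1). For (AK2), fix a flat (equivalently, any) $X' \subseteq X$; I must show $r(X' \mid Z) = r(X' \mid Y)$, i.e. $r(X'Z) - r(Z) = r(X'Y) - r(Y)$. Here $X'Z \subseteq X$ and $X'Y \subseteq XY$. The key computation is to apply submodularity to the pair $X'Y$ and... actually the cleaner route is: modularity of $(X,Y)$ forces modularity of $(X', Y)$ whenever $X' \subseteq X$ is a flat — this is a standard fact, since in any matroid the flats below a flat $X$ together with $Y$ inherit the modular relation from $(X,Y)$ by submodularity sandwiching. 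Concretely, $r(X') + r(Y) \ge r(X'Y) + r(X'\cap Y) = r(X'Y) + r(X' \cap X \cap Y)$, and one shows the reverse inequality using the modularity of $(X,Y)$ together with $r(X' \cup (X\cap Y)) \le r(X)$. Once $(X',Y)$ is modular, $r(X'Y) - r(Y) = r(X') - r(X'\cap Y) = r(X') - r(X' \cap Z) = r(X'Z) - r(Z)$, where the last equality is modularity of the pair $(X', Z)$ (both contained in the flat $X$, or directly: $X' \cap Z = X' \cap Y$ since $X' \subseteq X$). This yields (AK2).

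\smallskip
\noindent\textbf{($\Rightarrow$)} Suppose $Z \subseteq Q$ satisfies (AK1) and (AK2) for $(X,Y)$. First, (AK1) says $r(XZ) = r(X)$, so $\cl(Z) \subseteq \cl(X) = X$; replacing $Z$ by $\cl(Z)$ changes neither hypothesis (AK2 with $X' = X$ is unaffected and the conditional ranks only depend on closures), so assume $Z$ is a flat with $Z \subseteq X$. Now apply (AK2) with the choice $X' = Z$: the left side is $r(Z \mid Z) = 0$, so $r(Z \mid Y) = 0$, i.e. $\cl(Z) = Z \subseteq \cl(Y) = Y$, hence $Z \subseteq X \cap Y$. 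Next apply (AK2) with $X' = X$: this gives $r(X \mid Z) = r(X\mid Y)$, that is $r(X) - r(Z) = r(XY) - r(Y)$. Since $Z \subseteq X \cap Y$ we have $r(Z) \le r(X\cap Y)$, so $r(X) + r(Y) = r(XY) + r(Z) \le r(XY) + r(X\cap Y)$; the reverse inequality is submodularity (P3). Therefore $r(X)+r(Y) = r(XY) + r(X\cap Y)$, i.e. $(X,Y)$ is a modular pair (and incidentally $r(Z) = r(X\cap Y)$, consistent with $Z \subseteq X\cap Y$ being the natural witness).

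\smallskip
\noindent\textbf{Main obstacle.} The genuinely nontrivial step is in the ($\Leftarrow$) direction: showing that modularity of $(X,Y)$ propagates to modularity of $(X',Y)$ for every flat $X' \subseteq X$, which is what drives (AK2). This is a known property of modular pairs of flats, but it needs a short submodularity argument (bounding $r(X' \cup (X\cap Y))$ above by $r(X)$ and combining three instances of (P3)); I would isolate it as an inline sub-claim rather than cite it, since the reader may not have it at hand. Everything else is bookkeeping with $r(A\mid B) = r(AB) - r(B)$ and the observation that the hypotheses and conclusions are invariant under passing from $Z$ to its closure.
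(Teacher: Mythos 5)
Your $(\Rightarrow)$ direction is correct and essentially the same as the paper's: substitute $X'=Z$ and $X'=X$ into (AK2) to force $Z\subseteq X\cap Y$ and $r(Z)=r(X)+r(Y)-r(XY)$, then squeeze against submodularity. The paper's write-up of this direction is terser but identical in substance.

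The $(\Leftarrow)$ direction, however, has a genuine gap. You rest (AK2) on the claim that modularity of $(X,Y)$ propagates to $(X',Y)$ for every flat $X'\subseteq X$, calling it ``a standard fact''. It is false. Take the sparse paving matroid of rank $4$ on $\{a,b,c,d,e,f\}$ with circuit-hyperplanes $X=\{a,b,c,d\}$ and $Y=\{a,b,e,f\}$. Then $r(X)+r(Y)=6=r(XY)+r(X\cap Y)=4+2$, so $(X,Y)$ is modular; but $X'=\{c,d\}$ is a flat of $X$ with $r(X')+r(Y)=5$ while $r(X'Y)+r(X'\cap Y)=4+0=4$, so $(X',Y)$ is \emph{not} modular. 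The follow-up claim that $(X',Z)$ is modular ``since both lie in the flat $X$'' also fails in the same example ($r(X')+r(Z)=4$ but $r(X'Z)+r(X'\cap Z)=3$). Note that (AK2) nevertheless holds here: $r(X'|Z)=3-2=1=4-3=r(X'|Y)$; so the conclusion you want is true, but the route you took to it is broken. The correct verification of (AK2) for $Z=X\cap Y$ uses two submodularity applications, neither of which asserts modularity of any subpair: (i) submodularity on $(X'Z,\,Y)$, using $(X'Z)\cap Y=Z$ and $X'ZY=X'Y$, gives $r(X'|Z)\ge r(X'|Y)$; (ii) submodularity on $(X,\,X'Y)$, using $X\cup X'Y=XY$ and $X\cap X'Y\supseteq X'Z$, combined with $r(X)-r(XY)=r(Z)-r(Y)$ from modularity of $(X,Y)$, gives $r(X'|Y)\ge r(X'|Z)$. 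Together these yield (AK2) without any appeal to modularity of $(X',Y)$ or $(X',Z)$.
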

\begin{proof}
    The converse statement is straightforward (just take $Z=X\cap Y)$ and its proof is therefore omitted.
    For the forward direction, note that setting $X'=Z$ in (AK2) gives $r(Z|Y)=0$ and setting $X'=X$ gives $r(Z)=r(X;Y)$. Hence, $Z=X\cap Y$, and $r$ is therefore modular on $(X,Y)$.
\end{proof}

We conclude this section presenting a property of linear polymatroids, the \emph{common information} property ($\cip$). Though not a direct property of algebraic matroids, we introduce it here because we will use the fact that it implies the $\akp$ property~\cite{FKMP20} in some of our results.

\begin{definition}\label{def:CI}
For a polymatroid $(Q,f)$ and sets 
$X, Y \subseteq Q$,
an extension $(Q Z, g)$ of $(Q,f)$ is 
a \emph{common information extension},
or \emph{{\cip} extension}, for the pair 
$(X, Y)$ if the following conditions are satisfied:
\begin{description}

\item[(CI1)]
$g(Z|X) = g(Z|Y) = 0$, and
\item[(CI2)]
$g(Z)=g(X;Y)$.
    
\end{description}
Likewise, we call $Z$ the \emph{common information} of $X$ and $Y$ and denote it $\cip(X,Y)$. We say that a polymatroid satisfies the {\cip} property if for every $(X, Y)$ there exists a {\cip} extension.
\end{definition}

As shown in~\cite[Proposition 3.17]{BFP23}, the $\akp$ property is preserved by minors.
And, from~\cite[Proposition 3.16]{FKMP20} (or, alternatively, \cite[Proposition 3.13]{BFP23}), we have the following relationship between the $\cip$ and $\akp$ properties. We add its proof for the sake of completeness. 

\begin{proposition}\label{prop:CIFromAK}
    Let $\Mat=(Q,r)$ be a matroid, let $X,Y\subseteq Q$ be a nonmodular pair of flats of $\Mat$, and let $x_o=\akp(X,Y)$. Then $r(x_o)=r(X;Y)$. Hence, $x_o=\akp(Y,X)$ if and only if $x_o$ is the common information of $X$ and $Y$. 
\end{proposition}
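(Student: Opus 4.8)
The plan is to extract $r(x_o)$ directly from the two defining conditions of an $\akp$ extension and then read off the ``hence'' clause almost for free.

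First I would fix an $\akp$ extension $(Qx_o,r)$ realizing $x_o$, writing $r$ also for its rank function since it extends the rank function of $\Mat$. By (AK1), $r(x_o|X)=0$, i.e.\ $r(Xx_o)=r(X)$. By (AK2) applied with the single choice $X'=X$, $r(X|x_o)=r(X|Y)$; expanding the two conditional ranks and substituting $r(Xx_o)=r(X)$, this collapses to $r(x_o)=r(X)+r(Y)-r(XY)=r(X:Y)$, the first assertion (this is the same computation as in the proof of Lemma~\ref{lem: Z iff ModPair}). The identical computation applied to the pair $(Y,X)$ gives $r(y_o)=r(Y)+r(X)-r(YX)=r(X:Y)$ as well, so $x_o$ and $y_o$ always have the same rank, namely $r(X:Y)$.

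For the ``hence'' clause, suppose $x_o=y_o$ and call this common set $Z_o$. From $x_o=\akp(X,Y)$, condition (AK1) gives $r(Z_o|X)=0$; from $y_o=\akp(Y,X)$, condition (AK1) for the pair $(Y,X)$ gives $r(Z_o|Y)=0$. Together these are exactly (CI1), and the first part gives $r(Z_o)=r(X:Y)$, which is (CI2); hence $Z_o$ is a common information extension for $(X,Y)$, that is, $x_o=y_o=\cip(X,Y)$. The converse is immediate: if $x_o$ and $y_o$ are both the common information of $X$ and $Y$, then $x_o=y_o$.

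There is essentially no obstacle here; the one point that needs care is that (AK1) for $\akp(Y,X)$ conditions on $Y$ rather than on $X$, so it is the combination of the two instances of (AK1), one from each side, that produces the symmetric requirement (CI1). If one wishes, the ``only if'' direction can be upgraded to the sharper statement that whenever $\cip(X,Y)$ exists it is itself a legitimate $\akp(X,Y)$ and $\akp(Y,X)$: this follows because (CI1) together with $r(Z_o)=r(X:Y)$ and $Z_o$ lying in the closures of both $X$ and $Y$ forces $r(X|YZ_o)=r(X|Z_o)$, i.e.\ conditional independence of $X$ and $Y$ given $Z_o$, whence $r(X'|Z_o)=r(X'|Y)$ for every $X'\subseteq X$ by monotonicity of conditional mutual information — but this refinement is not needed for the statement as phrased.
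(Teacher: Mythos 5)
The paper does not prove this proposition; it is quoted directly from \cite[Proposition 3.16]{FKMP20}, so there is no in-paper proof to compare against. Your argument is correct and self-contained: taking $X'=X$ in (AK2) and combining with (AK1) is exactly the computation the paper itself uses (inside the proof of Lemma~\ref{lem: Z iff ModPair}) to extract $r(x_o)=r(X)+r(Y)-r(XY)$, and the symmetric computation for $\akp(Y,X)$, together with the two instances of (AK1), gives (CI1) and (CI2) for $Z_o=x_o=y_o$. The closing refinement — that $\cip(X,Y)$, when it exists, is itself a valid $\akp(X,Y)$ and $\akp(Y,X)$ because (CI1) and (CI2) force $X$ and $Y$ to be conditionally independent given $Z_o$ — is a genuinely useful observation that the paper only states as a one-directional implication (a $\cip$ extension is an $\akp$ extension, from~\cite[Proposition 3.15]{FKMP20}); spelling out why it holds strengthens the exposition without changing the substance of the result.
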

\begin{proof}
If $x_o=\akp(X,Y)$, then $r(x_o|X)=0$ and $r(Xx_o)=r(X)$ by (AK1). By (AK2),
\[0=r(X|x_o)-r(X|Y)=r(Xx_o)-r(x_o)-r(XY)+r(Y)=r(X;Y)-r(x_o),\]
proving the first statement. If $x_o=\akp(Y,X)$, then $r(x_o|Y)=0$, and $x_o$ satisfies (CI1). Conversely, a common information of $(X,Y)$ satisfies (AK1) and (AK2) for $(X,Y)$ and $(Y,X)$.
\end{proof}

Combining Lemma~\ref{lem: Z iff ModPair} and Proposition~\ref{prop:CIFromAK}, we have that, for flats $X$ and $Y$, $\akp(X,Y)=\akp(Y,X)$ if and only if $r$ is modular on $X$ and $Y$.

\section{Optimizing $\akp$ for Polymatroids}
\label{sec:PropsOfAKPolys}
We observed that, similar to $\cip$, checking if a matroid is $\akp$ can be restricted to only the flats of the matroid as opposed to checking all possible subsets of the ground set of the matroid. This fact is presented in the following theorem, which is proved later in this section. 

\begin{theorem}\label{thm:AKRedefined}
A polymatroid $(Q,f)$ satisfies the $\akp$ property if for every pair of flats $(X,Y)$, there is an extension $(QZ,f)$ that satisfies conditions (AK1) and (AK2'), where 
\begin{description}
\item[(AK2')] $\,\,\, f(X' | Z)=f(X' |Y)$ for every flat $X' \subseteq X$.
\end{description}
\end{theorem}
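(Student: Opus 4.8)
The goal is to show that the weakened condition (AK2'), which only requires matching conditional ranks $f(X'|Z) = f(X'|Y)$ over flats $X' \subseteq X$ rather than all subsets $X' \subseteq X$, already forces the full $\akp$ property. The plan is to take an arbitrary pair of sets $(A,B) \subseteq Q$ and produce an $\akp$ extension for $(A,B)$ by invoking the hypothesis on a suitably chosen pair of flats, then checking that the resulting $Z$ satisfies (AK1) and (AK2) for $(A,B)$ — not just for the flats. The natural candidates are $X = \cl(A)$ and $Y = \cl(B)$, the closures (flat spans) of $A$ and $B$; since $f(S) = f(\cl(S))$ for any $S$ in a matroid, one expects rank identities involving $A$ to transfer to identities involving $X$.

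First I would record the elementary submodular observations I need: if $\cl(A) = X$ then $f(A) = f(X)$, and more generally adding elements of $\cl(A)\setminus A$ to any set does not change the rank. From this, (AK1) is easy: if $g(Z|X) = 0$ then $g(Z|A) = g(ZA) - g(A)$, and since $A \subseteq X \subseteq ZX = ZA \cup (X\setminus A) \subseteq \cl_{g}(ZA)$ — here I need that in the extension $g$, the closure of $ZA$ still contains $X$, which follows because $g(Z|X)=0$ means $X \cup Z$ has the same rank as $X$, hence the same rank as $A$, hence as $ZA$ — we get $g(ZA) = g(A)$, i.e. (AK1) for $(A,B)$. The second condition (AK2) is the substantive part: I must show $g(A'|Z) = g(A'|B)$ for \emph{every} $A' \subseteq A$, given that the analogous equality holds for every flat $X' \subseteq X$. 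The idea is that both sides are "$\cl$-invariant" in an appropriate sense. Concretely, $g(A'|Z) = g(A'Z) - g(Z)$; I want to relate this to $g(\cl(A')|Z)$ or to the flat generated by $A'$ inside $X$. Since $A' \subseteq A \subseteq X$, the matroid closure $X' := \cl_\Mat(A') \cap \text{(relevant flat)}$ — more carefully, $\cl_\Mat(A')$ itself is a flat contained in... not necessarily in $X$. So I would instead use $X' = \cl_\Mat(A')$; it is a flat, but it need not sit inside $X$. This is the first obstacle, and the fix is to observe that the hypothesis quantifies over flats $X' \subseteq X$, and any such flat of the form "flat generated by a subset of $A$" does lie in $X$ because $A \subseteq X$ and $X$ is closed.

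So the clean route is: for $A' \subseteq A$, set $X' = \cl_\Mat(A')$; since $A' \subseteq A \subseteq X$ and $X$ is a flat, $X' \subseteq X$. Apply (AK2') to this $X'$: $g(X'|Z) = g(X'|Y)$. Now it remains to show $g(A'|Z) = g(X'|Z)$ and $g(A'|B) = g(X'|Y)$. For the latter: $g(A'|B) = f(A'B) - f(B)$, and since $X' \setminus A' \subseteq \cl_\Mat(A') \subseteq \cl_\Mat(A'B)$, adjoining $X'\setminus A'$ to $A'B$ does not change rank, so $f(A'B) = f(X'B)$; similarly $f(B) = f(Y)$ because $Y = \cl(B)$; hence $g(A'|B) = f(X'B) - f(Y) = g(X'|Y)$, using once more that $f(X'B) = f(X'Y)$. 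For the former, $g(A'|Z)$: here I need that in the extension polymatroid $(QZ,g)$, adjoining $X' \setminus A'$ to $A'Z$ does not change the rank. This follows from (AK1): $g(Z|X) = 0$ forces $g$ restricted to $Q$ to equal $f$ (an $\akp$ extension does not alter the original polymatroid — strictly I should note $(QZ,g)$ is an \emph{extension}, so $g|_{\mathcal P(Q)} = f$ by definition), and then $X' \setminus A' \subseteq \cl_f(A') \subseteq \cl_g(A'Z)$. Therefore $g(A'Z) = g(X'Z)$ and $g(A'|Z) = g(X'|Z)$. Chaining the three equalities gives $g(A'|Z) = g(X'|Z) = g(X'|Y) = g(A'|B)$, which is (AK2) for $(A,B)$.

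I expect the main obstacle to be bookkeeping around closures in the \emph{extension} $g$ versus the original $f$: one must be careful that "$X'\setminus A'$ is in the closure of $A'$" (a statement about $\Mat$) indeed implies the rank-invariance statements about $g$, which is legitimate precisely because an extension agrees with $f$ on subsets of $Q$ and rank-zero-over-a-set is preserved under taking supersets of that set. Once that is pinned down, every step is a one-line submodularity/monotonicity computation, and the converse direction of the theorem is trivial since (AK2) $\Rightarrow$ (AK2'). I would also remark that the same argument shows the flats version and the full version of the $\akp$ property are genuinely equivalent, paralleling the known analogous fact for $\cip$.
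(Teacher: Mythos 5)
Your argument is correct and matches the paper's approach: both reduce an arbitrary pair $(A,B)$ to the flats $(\cl(A),\cl(B))$, invoke the hypothesis there, and then use closure-invariance of the rank function (in the ground polymatroid and in the extension, the key point being that the extension agrees with $f$ on $Q$) to transfer (AK1) and the full (AK2) back to $(A,B)$ and to every subset $A'\subseteq A$ via $X'=\cl(A')$. The paper merely packages these closure computations into two separate statements (Lemma~\ref{prop:AK2reduced} and Proposition~\ref{prop:AKForFlatsNew}) rather than carrying them out inline as you do.
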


As a consequence of this theorem, we can check the existence of $\akp$ extensions of a polymatroid with the following linear program. Note that while this is defined for a single extension, it can easily be extended to an arbitrary number of extensions of the polymatroid. 

\begin{lpp}
\label{tab:LPAKold}
Given a polymatroid $(Q,f)$, check if for every pair of flats $(U,V)$ there exists a polymatroid extension $(Qx_0,f)$ that satisfies (AK1) and (AK2').
\end{lpp}

Theorem~\ref{thm:AKRedefined} is proved in two steps. First, we show in Lemma~\ref{prop:AK2reduced} that (AK2) can be reduced to (AK2'). Then in Proposition~\ref{prop:AKForFlatsNew}, we show that it is enough to check (AK1) and (AK2) for pairs of flats. But first, the following results are some of the properties of polymatroids that will be used frequently (sometimes implicitly) in our discussions on $\akp$.

\begin{lemma}\label{lem:closure}
    Let $\Poly=(Q,f)$ be a polymatroid. For any $X,Y\subseteq Q$, $$f(XY)=f(\bar{X}Y)=f(\bar{X}\bar{Y}),$$
    where $\bar{X}$ and $\bar{Y}$ are the closures of $X$ and $Y$, respectively.
\end{lemma}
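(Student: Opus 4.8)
The plan is to prove that $f(XY) = f(\bar X Y) = f(\bar X \bar Y)$, where $\bar X$ denotes the closure of $X$ in the polymatroid $\Poly = (Q,f)$. First I recall what closure means in a polymatroid: $\bar X$ is the largest set with $f(\bar X) = f(X)$ and $X \subseteq \bar X$, or equivalently $\bar X = \{q \in Q : f(Xq) = f(X)\}$; one should first check this is well-defined, i.e. that the union of all sets $S \supseteq X$ with $f(S) = f(X)$ again has rank $f(X)$. This follows from submodularity (P3): if $f(S_1) = f(S_2) = f(X)$ with $X \subseteq S_1, S_2$, then $f(S_1 \cup S_2) + f(S_1 \cap S_2) \le f(S_1) + f(S_2) = 2f(X)$, and since $X \subseteq S_1 \cap S_2 \subseteq S_1 \cup S_2$, monotonicity (P2) forces $f(S_1 \cup S_2) = f(X)$; iterating over all such sets gives $f(\bar X) = f(X)$.

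Next, the key lemma is that adding elements of $\bar X$ to any superset does not change the rank: for any $W$ with $X \subseteq W$ and any $q \in \bar X$, we have $f(Wq) = f(W)$. This is again a submodularity argument: $f(Wq) + f((Xq) \cap W) \le f(W) + f(Xq)$, but $(Xq) \cap W \supseteq X$ (since $X \subseteq W$ and $X \subseteq Xq$), so $f((Xq)\cap W) \ge f(X) = f(Xq)$, hence $f(Wq) \le f(W)$, and the reverse inequality is monotonicity. By induction on $|\bar X \setminus X|$, adding all of $\bar X$ to $W$ leaves the rank unchanged, i.e. $f(W \cup \bar X) = f(W)$ whenever $X \subseteq W$.

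Now I apply this with $W = XY$: since $X \subseteq XY$, the lemma gives $f(XY) = f(XY \cup \bar X) = f(\bar X Y)$, which is the first equality. For the second equality, I repeat the argument with the roles of $X$ and $Y$ swapped, applied to the set $\bar X Y$: since $Y \subseteq \bar X Y$, adding $\bar Y$ does not change the rank, so $f(\bar X Y) = f(\bar X Y \cup \bar Y) = f(\bar X \bar Y)$. Chaining these gives $f(XY) = f(\bar X Y) = f(\bar X \bar Y)$.

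I do not expect any serious obstacle here; the whole proof is a couple of applications of (P2) and (P3) plus a routine induction. The only point requiring a little care is making sure the closure operator is well-defined before using it — i.e. establishing $f(\bar X) = f(X)$ — but as sketched above this is an immediate consequence of submodularity. Everything else is bookkeeping.
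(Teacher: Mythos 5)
Your proof is correct, and the underlying mechanism (submodularity plus monotonicity) is the same, but the route is genuinely different in structure. You first establish an elementwise absorption lemma — for any $W\supseteq X$ and $q\in\bar X$, the inequality $f(W)+f(Xq)\geq f(Wq)+f(W\cap Xq)\geq f(Wq)+f(X)$ together with $f(Xq)=f(X)$ forces $f(Wq)=f(W)$ — and then induct over the elements of $\bar X\setminus X$ to get $f(W\cup\bar X)=f(W)$, which you apply with $W=XY$ and then again with $W=\bar XY$. The paper instead applies submodularity a single time to the pair $(XY,\bar X)$, noting $XY\cup\bar X=\bar XY$ and $XY\cap\bar X=X\cup(\bar X\cap Y)\supseteq X$, so $f(XY)+f(\bar X)\geq f(\bar XY)+f(\bar X)$ in one step, then invokes the obvious symmetry for the second equality. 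Your approach is a bit longer because you add elements one at a time, but it buys you the slightly stronger and reusable fact that individual closure elements can be absorbed into any superset of $X$ without changing rank; you also take care to check that the closure operator is well-defined, which the paper silently assumes. Both are sound; the paper's is the more economical write-up.
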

\begin{proof}
It is enough to prove that $f(XY)=f(\bar{X}Y).$
By monotonicity, $f(\Bar{X}Y) \geq f(XY)$. By submodularity, we have that
    \begin{align*}
        f(XY) + f(\Bar{X}) &\geq f(XY\Bar{X}) + f(XY\cap\Bar{X}) = f(\Bar{X}Y) + f(X\cup (\Bar{X}\cap Y)) \\
        &\geq f(\Bar{X}Y) + f(X) = f(\Bar{X}Y) + f(\Bar{X}).
    \end{align*}
Thus, $f(XY)=f(\Bar{X}Y)$.
\end{proof}

\begin{lemma}\label{lem:ClosureExtension}
    Let $\Poly=(Q,f)$ be a polymatroid and let $\Poly'=(Q Z,f)$ be an extension of $\Poly$. For any $U\subseteq Q$, $V\subseteq Q Z$ and $\Bar{U}=\cl_{\Poly}(U)$, $$f(UV)=f(\Bar{U}V).$$
\end{lemma}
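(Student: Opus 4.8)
The plan is to mimic the submodularity argument from the proof of Lemma~\ref{lem:closure}, the only new ingredient being that the closure is computed in the smaller polymatroid $\Poly$ rather than in its extension. First I would record the auxiliary fact that $f(\bar{U})=f(U)$: since $\Poly'=(QZ,f)$ is an extension of $\Poly=(Q,f)$, the two rank functions agree on all subsets of $Q$, and each element of $\bar{U}\setminus U$ can be adjoined to $U$ without increasing the rank (that is the definition of closure), so by submodularity $f(\bar{U})=f(U)$; this computation stays entirely inside $\mathcal{P}(Q)$.

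With that in hand, the inequality $f(\bar{U}V)\ge f(UV)$ is immediate from monotonicity (P2), since $UV\subseteq \bar{U}V$. For the reverse inequality I would apply submodularity (P3) to the sets $UV$ and $\bar{U}$, both regarded as subsets of $QZ$. Their union is $\bar{U}V$, and their intersection is $U\cup(V\cap\bar{U})$; since $V\cap\bar{U}\subseteq\bar{U}\subseteq Q$ we have $U\subseteq U\cup(V\cap\bar{U})\subseteq\bar{U}$, so monotonicity squeezes $f\bigl(U\cup(V\cap\bar{U})\bigr)=f(U)=f(\bar{U})$. Hence
\[
f(UV)+f(\bar{U})\;\ge\;f(\bar{U}V)+f\bigl(U\cup(V\cap\bar{U})\bigr)\;=\;f(\bar{U}V)+f(\bar{U}),
\]
and cancelling $f(\bar{U})$ yields $f(UV)\ge f(\bar{U}V)$, which together with monotonicity gives the claimed equality.

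The only point that needs care — and the one I would flag as the main, if mild, obstacle — is that $\bar{U}=\cl_{\Poly}(U)$ refers to the closure taken in $\Poly$, not in $\Poly'$, so one has to be certain that $f(\bar{U})=f(U)$ still holds for the extended rank function. This is fine precisely because $f$ restricted to $\mathcal{P}(Q)$ is unchanged by the extension, so no element of $Z$ ever enters through the closure; everything else is a verbatim repetition of the computation in Lemma~\ref{lem:closure}.
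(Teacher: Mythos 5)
Your proof is correct, but it takes a different route from the paper's.  The paper's proof is a one-line sandwich: since $\bar U=\cl_{\Poly}(U)\subseteq\cl_{\Poly'}(U)$, monotonicity gives $f(UV)\le f(\bar U V)\le f(\cl_{\Poly'}(U)V)$, and Lemma~\ref{lem:closure} applied inside $\Poly'$ makes the two extreme terms equal, forcing equality throughout.  You instead re-run the submodularity computation from Lemma~\ref{lem:closure} directly on $UV$ and $\bar U$ inside $\Poly'$, and the new ingredient you correctly isolate is the auxiliary fact $f(\bar U)=f(U)$, which survives the extension because $f$ restricted to $\mathcal P(Q)$ is unchanged.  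Your squeeze $f(U)\le f(U\cup(V\cap\bar U))\le f(\bar U)=f(U)$ and the subsequent cancellation are all sound, so the argument is complete.  The trade-off is exactly what you would expect: the paper's version is shorter because it reuses Lemma~\ref{lem:closure} as a black box (applying it to the \emph{larger} closure $\cl_{\Poly'}(U)$, which avoids having to talk about ranks of closures at all), whereas yours is more self-contained and makes the role of ``closure in $\Poly$ versus closure in $\Poly'$'' explicit, which you rightly flag as the only place a reader might hesitate.
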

\begin{proof}
    Observe that
    $f(UV)\leq f(\Bar{U}V)\leq f(\cl_{\Poly'}(U)V).$
     By Lemma~\ref{lem:closure}, the first and last terms above are equal, proving the result.
\end{proof}

In the next result, we show that one does not in fact need to check property (AK2) of Definition~\ref{def:NewAK} for every subset $X'$ of $X$; it is sufficient to check it only for subsets of $X$ that are flats. 

\begin{lemma}\label{prop:AK2reduced}
    Let $\Poly=(Q,f)$ be a polymatroid and let $\Poly'=(Q Z,f)$ be an extension of $\Poly$. 
    Take $X,Y,\Bar{X}\subseteq Q$ where $\Bar{X}=\cl_{\Poly}(X)$. Then 
    \[ f(X|Z) = f(X|Y)\,\text{ if and only if }\, f(\Bar{X}|Z)=f(\Bar{X}|Y).\]
\end{lemma}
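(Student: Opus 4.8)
The statement relates two conditions: $f(X|Z) = f(X|Y)$ and $f(\bar X|Z) = f(\bar X|Y)$, where $\bar X = \cl_{\Poly}(X)$. The natural strategy is to compare the two differences using the closure identity from Lemma~\ref{lem:ClosureExtension}. The key observation is that since $\bar X$ is the closure of $X$ \emph{inside $\Poly$} (not $\Poly'$), applying Lemma~\ref{lem:ClosureExtension} with $U = X$ and $V$ ranging over subsets of the extended ground set $QZ$ lets us rewrite joint rank values $f(XV)$ as $f(\bar X V)$. Concretely, I would write $f(X|Z) = f(XZ) - f(Z) = f(\bar X Z) - f(Z) = f(\bar X|Z)$, using Lemma~\ref{lem:ClosureExtension} with $V = Z$. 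Similarly, $f(X|Y) = f(XY) - f(Y) = f(\bar X Y) - f(Y) = f(\bar X|Y)$, using Lemma~\ref{lem:ClosureExtension} with $V = Y \subseteq Q$ (here even Lemma~\ref{lem:closure} suffices, but Lemma~\ref{lem:ClosureExtension} covers both cases uniformly).

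So in fact the two sides are not merely equivalent but termwise equal: $f(X|Z) = f(\bar X|Z)$ and $f(X|Y) = f(\bar X|Y)$ as identities, from which the ``if and only if'' is immediate. I would state it that way — prove the two equalities $f(X|Z)=f(\bar X|Z)$ and $f(X|Y)=f(\bar X|Y)$ separately, each as a one-line consequence of Lemma~\ref{lem:ClosureExtension}, and then conclude. The only mild subtlety is making sure the hypothesis of Lemma~\ref{lem:ClosureExtension} is met: that lemma requires $U \subseteq Q$ (satisfied, $X \subseteq Q$), $V \subseteq QZ$ (satisfied for both $V = Z$ and $V = Y$), and $\bar U = \cl_{\Poly}(U)$ taken in the \emph{original} polymatroid — which matches exactly the hypothesis $\bar X = \cl_{\Poly}(X)$ given here.

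I do not expect any real obstacle: the entire content is the closure-invariance lemma that has just been proved, applied twice. If anything, the one thing to be careful about is not to accidentally invoke closure in $\Poly'$ instead of $\Poly$ — the statement is deliberately about $\cl_{\Poly}$, and the proof of Lemma~\ref{lem:ClosureExtension} already absorbed the bookkeeping about $\cl_{\Poly}(U) \subseteq \cl_{\Poly'}(U)$, so I can cite it as a black box. I would keep the write-up to three or four lines.

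\medskip
\noindent\textbf{Proof.} By Lemma~\ref{lem:ClosureExtension} applied with $U = X$ and $V = Z \subseteq QZ$, we have $f(XZ) = f(\bar X Z)$, hence
\[
f(X|Z) = f(XZ) - f(Z) = f(\bar X Z) - f(Z) = f(\bar X|Z).
\]
Likewise, applying Lemma~\ref{lem:ClosureExtension} with $U = X$ and $V = Y \subseteq Q \subseteq QZ$ gives $f(XY) = f(\bar X Y)$, hence
\[
f(X|Y) = f(XY) - f(Y) = f(\bar X Y) - f(Y) = f(\bar X|Y).
\]
Therefore $f(X|Z) = f(X|Y)$ holds if and only if $f(\bar X|Z) = f(\bar X|Y)$ holds. \qed
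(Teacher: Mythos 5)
Your proof is correct and follows essentially the same route as the paper's: you derive the two termwise identities $f(X|Z)=f(\bar X|Z)$ and $f(X|Y)=f(\bar X|Y)$ from the closure-invariance lemmas and conclude. The only cosmetic difference is that the paper invokes Lemma~\ref{lem:closure} for the $Y$ case (since $Y\subseteq Q$) while you uniformly use Lemma~\ref{lem:ClosureExtension}, a substitution you yourself flag as equivalent.
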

\begin{proof}
    By Lemma~\ref{lem:ClosureExtension}, we have that $f(\Bar{X}Z)=f(XZ)$, and $f(\Bar{X}Y)=f(XY)$ by Lemma~\ref{lem:closure}. Thus,  
    $f(\Bar{X}|Z)=f(\Bar{X}Z) - f(Z) = f(XZ) - f(Z) = f(X|Z)$ and, analogously, $f(\Bar{X}|Y) = f(X|Y)$,
    completing the proof.
\end{proof}
As a consequence of this result, (AK2) is equivalent to (AK2').
We show next that for any matroid, the $\akp$ property can be checked using only flats of the polymatroid.

\begin{proposition}\label{prop:AKForFlatsNew}
    Let $\Poly=(Q,f)$ be a polymatroid, let $X,Y\subseteq Q$, and let $\Bar{X}=\cl(X)$ and $\Bar{Y}=\cl(Y)$. If $\Poly$ admits an $\akp$ extension for $(\Bar{X},\Bar{Y})$ then it admits an $\akp$ extension for $(X,Y)$.
\end{proposition}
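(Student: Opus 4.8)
The plan is to take an $\akp$ extension $(QZ, g)$ of $\Poly$ for the pair of flats $(\bar{X}, \bar{Y})$ and show that the very same extension — possibly with the ground set relabelled, but with no change to the rank function — is already an $\akp$ extension for $(X,Y)$. So I would let $(QZ,g)$ satisfy (AK1) and (AK2) for $(\bar{X},\bar{Y})$, i.e.\ $g(Z|\bar{X})=0$ and $g(X'|Z)=g(X'|\bar{Y})$ for every $X'\subseteq\bar{X}$, and then verify (AK1) and (AK2) for $(X,Y)$ directly.

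First I would check (AK1): we need $g(Z|X)=0$. Since $X\subseteq\bar{X}$ and closure in $\Poly$ agrees with closure restricted to $Q$ inside the extension (by Lemma~\ref{lem:closure} / Lemma~\ref{lem:ClosureExtension}, using $\bar{X}=\cl_{\Poly}(X)=\cl_{\Poly'}(X)\cap Q$ on the relevant sets), we have $g(XZ)=g(\bar{X}Z)$ by Lemma~\ref{lem:ClosureExtension} and $g(X)=g(\bar{X})$ by Lemma~\ref{lem:closure}; hence $g(Z|X)=g(XZ)-g(X)=g(\bar{X}Z)-g(\bar{X})=g(Z|\bar{X})=0$. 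For (AK2), fix an arbitrary $X'\subseteq X$ and let $\bar{X'}=\cl_{\Poly}(X')$; note $\bar{X'}\subseteq\bar{X}$ since $X'\subseteq X$ and closure is monotone, so (AK2) for $(\bar{X},\bar{Y})$ applies to $\bar{X'}$ and gives $g(\bar{X'}|Z)=g(\bar{X'}|\bar{Y})$. By Lemma~\ref{prop:AK2reduced} (applied in $\Poly'$, and to the $Y=\bar{Y}$ side noting $g(X'\bar{Y})=g(X'Y)$ by Lemma~\ref{lem:closure} again), $g(X'|Z)=g(\bar{X'}|Z)$ and $g(X'|\bar{Y})=g(\bar{X'}|\bar{Y})$; finally $g(\bar{Y}\cup X')=g(Y\cup X')$ by Lemma~\ref{lem:closure} so $g(X'|\bar{Y})=g(X'|Y)$. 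Chaining these equalities yields $g(X'|Z)=g(X'|Y)$, which is (AK2) for $(X,Y)$.

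The step I expect to require the most care is the bookkeeping around closures: one must be careful that $\cl_{\Poly}(X')$ is a subset of $\bar{X}$ (so that the hypothesis on flats actually covers it), and that replacing $\bar{Y}$ by $Y$ in the conditional expressions is legitimate — the latter is just Lemma~\ref{lem:closure} since $Y$ and $\bar{Y}$ have the same closure in $\Poly$, but it is the one place where the asymmetry between the ``$X$-side'' (where we pass through the extension via Lemma~\ref{lem:ClosureExtension}) and the ``$Y$-side'' (which lives entirely in $\Poly$) has to be handled. Everything else is a routine application of submodularity bookkeeping already packaged into Lemmas~\ref{lem:closure}, \ref{lem:ClosureExtension}, and~\ref{prop:AK2reduced}. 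Once these three equalities are in place, no new extension needs to be constructed, which is what makes the reduction to flats essentially free.
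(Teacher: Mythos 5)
Your proof is correct and takes essentially the same route as the paper's: take the $\akp$ extension for $(\bar X,\bar Y)$ and use the closure lemmas to show it already satisfies (AK1) and (AK2) for $(X,Y)$. The detour through $\bar{X'}$ and Lemma~\ref{prop:AK2reduced} in your (AK2) step is unnecessary --- since $X'\subseteq X\subseteq\bar X$, the hypothesis (AK2) for $(\bar X,\bar Y)$ already yields $g(X'|Z)=g(X'|\bar Y)$ directly, and a single application of Lemma~\ref{lem:closure} then gives $g(X'|\bar Y)=g(X'|Y)$.
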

\begin{proof}
    Let $Z=\akp(\Bar{X},\Bar{Y})$. Note that $f(Z|X)=0$ by Lemma~\ref{lem:closure}. And for all $X'\subseteq X$, since $f(X'|Z)=f(X'|\Bar{Y})$, then $f(X'|Z)=f(X'|Y)$ again by Lemma~\ref{lem:closure}, completing the proof.
\end{proof}

Hence, if the polymatroid admits an $\akp$ extension for every pair of flats, then it does so for every pair of sets, proving the theorem.

This reduction in the primary number of sets involved in the $\akp$ property from 3 to 2 greatly reduces computation time in checking the property. Given a matroid $(Q,r)$ with flats $\{F_1,F_2,\ldots,F_k\}$ for some integer $k$, while the 3-set formulation of the $\akp$ property involves checking about $k!/(k-3)!$ triples of flats, the 2-set formulation involves about $k!/(k-2)!$ pairs of flats. 

\section{Optimizing $\akp$ and $\dlp$ for Matroids}
\label{sec:OptForAKAndDL}
Admitting an $\akp$ extension for $(X,Y)$ does not necessarily indicate admitting an $\akp$ extension for $(Y,X)$~\cite[Proposition 3.16]{FKMP20}, and it might therefore be worthwhile to apply LP~\ref{tab:LPAKold} for both $(X,Y)$ and $(Y,X)$ simultaneously when checking for $\akp$ extensions. Nevertheless, the results here show that this does not help in some cases. 

It's important to note that results in Section~\ref{sub: AK opts} are \emph{negative} in the sense that they implicitly show us which pairs of sets to avoid when testing for $\akp$. 
This is due to the fact that a polymatroid satisfying the $\akp$ property doesn't necessarily mean it is almost entropic. But on the other hand, if it does not satisfy $\akp$, then we know for sure it is not almost entropic, and therefore, also not algebraic. Hence, it is more practical using it to find polymatroids in the latter category.

The next two results are well-known properties of matroids that we will apply at various times for the rest of this section. 
\begin{lemma}\label{lem:sprspvngcirhyps}
    Let $H_1$ and $H_2$ be distinct circuit-hyperplanes of a matroid $\Mat$ of rank $k$. Then $r(H_1\cap H_2)\leq k-2$. In addition, if $\Mat$ is paving, then $|H_1\cap H_2|\leq k-2$.
\end{lemma}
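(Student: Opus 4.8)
The plan is to prove both statements by exploiting the defining features of circuit-hyperplanes. Recall that in a matroid $\Mat$ of rank $k$, a circuit-hyperplane $H$ is simultaneously a circuit (so $r(H)=|H|-1$ and in fact $|H|=k$, since a circuit that is also a hyperplane has rank $k-1$ and cardinality $k$) and a flat of rank $k-1$. Let $H_1,H_2$ be distinct circuit-hyperplanes.

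First I would handle the rank bound $r(H_1\cap H_2)\le k-2$. Since $H_1$ and $H_2$ are distinct hyperplanes (flats of rank $k-1$), their union is not a flat contained in a hyperplane, so $r(H_1\cup H_2)=k$. Apply submodularity (P3): $r(H_1\cap H_2)+r(H_1\cup H_2)\le r(H_1)+r(H_2)=2(k-1)$, hence $r(H_1\cap H_2)\le 2(k-1)-k=k-2$. This is the easy half.

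For the second statement, assume $\Mat$ is sparse paving and suppose toward a contradiction that $|H_1\cap H_2|\ge k-1$. Since $|H_1|=|H_2|=k$ and $H_1\ne H_2$, we cannot have $|H_1\cap H_2|=k$, so $|H_1\cap H_2|=k-1$; write $H_1=Se_1$ and $H_2=Se_2$ with $|S|=k-1$ and $e_1\ne e_2$. Now $S$ is a proper subset of the circuit $H_1$, hence independent, so $r(S)=k-1$. I would then look at the set $Se_2=H_2$, which has rank $k-1$ and size $k$, so it is a circuit, i.e. a minimal dependent set; therefore every proper subset, in particular $S$, is independent — consistent so far. The contradiction should come from considering $H_1\cup H_2=Se_1e_2$, which has size $k+1$ and (being a union of two distinct hyperplanes) rank $k$; pick a basis $B$ of $\Mat$ inside $H_1\cup H_2$ containing $S$ (possible since $r(S)=k-1$), so $B=S\cup\{e_i\}$ for $i=1$ or $i=2$. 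But $Se_1=H_1$ is a circuit, hence dependent, and likewise $Se_2=H_2$ is a circuit, hence dependent — so neither $S\cup\{e_1\}$ nor $S\cup\{e_2\}$ can be a basis, contradicting the existence of $B$. Hence $|H_1\cap H_2|\le k-2$.

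The main obstacle is just making sure the cardinality bookkeeping in the sparse-paving case is airtight: one must use both that circuit-hyperplanes have cardinality exactly $k$ (circuit of rank $k-1$) and that any $(k-1)$-subset common to two of them is independent, then derive the contradiction from the fact that adding either remaining element closes up to a dependent set of full rank, which is impossible for a basis. I expect the rest to be routine applications of submodularity and the circuit axioms as recorded in Section~\ref{sec: mats and polys}.
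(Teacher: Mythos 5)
Your proof of the first inequality is exactly the paper's argument: submodularity plus $r(H_1\cup H_2)=k$ for two distinct hyperplanes.

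For the cardinality bound you take a genuinely different and considerably longer route than the paper. The paper's whole argument for the second part is: $H_1\cap H_2$ is independent (being a proper subset of the circuit $H_1$), so $|H_1\cap H_2|=r(H_1\cap H_2)\le k-2$ by the first part. You actually \emph{observe} the key fact in passing --- ``$S$ is a proper subset of the circuit $H_1$, hence independent'' --- but then do not use it to close the argument immediately; instead you build a contradiction around extending $S$ to a basis of $H_1\cup H_2$ and noting that both candidate bases $Se_1$ and $Se_2$ are the circuits $H_1$ and $H_2$. That contradiction argument is correct (the augmentation axiom does give a basis of $H_1\cup H_2$ containing $S$, and it must be one of those two sets), but it is unnecessary: once you know $S$ is independent and $r(S)\le k-2$, the case $|S|=k-1$ is already impossible, with no need to inspect $H_1\cup H_2$ at all. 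A small side remark worth noting: as both your proof and the paper's implicitly show, the sparse-paving hypothesis is not actually used --- the proper-subset-of-a-circuit observation gives independence of $H_1\cap H_2$ in any matroid --- so the cardinality bound holds for arbitrary matroids with distinct circuit-hyperplanes.
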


\begin{proof}
By submodularity,
$$2(k-1)=r(H_1)+r(H_2)\geq r(H_1H_2)+r(H_1\cap H_2) =k+r(H_1\cap H_2),$$
so $k-2\geq r(H_1\cap H_2)$. The second part of the result holds because $H_1\cap H_2$ is an independent set in a paving matroid.
\end{proof}

\begin{lemma}\label{lem:linesAndHyps}
    Let $\Mat=(Q,r)$ be a matroid of rank $k$ and let $X,Y$ be a nonmodular pair of flats of $\Mat$.
    \begin{enumerate}
        \item[(i)] If $X$ is a line, then $r(X|Y) = 1$ and $r(X\cap Y)=0$. 
        \item[(ii)] If $Y$ is a hyperplane, then $r(Y|X')>r(Y|X)$ for every flat $X'$ in $X$.
    \end{enumerate}
\end{lemma}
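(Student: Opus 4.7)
Both parts follow from a short case analysis exploiting the extremal rank of $X$ (a line) or $Y$ (a hyperplane), combined with the strict inequality in the nonmodularity condition $r(X) + r(Y) > r(X \cup Y) + r(X \cap Y)$.

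For (i), I would first note that a nonmodular pair cannot have $X \subseteq Y$ (which would force modularity trivially), so $X \not\subseteq Y$, and since $Y$ is a flat this gives $r(X \cup Y) \geq r(Y) + 1$. Because $X$ is a line, $X \cap Y$ is a proper subflat of $X$, hence $r(X \cap Y) \in \{0, 1\}$. Rewriting nonmodularity as $r(X \cup Y) + r(X \cap Y) < r(Y) + 2$ rules out $r(X \cap Y) = 1$, and in the case $r(X \cap Y) = 0$ it forces $r(X \cup Y) = r(Y) + 1$. This gives $r(X \cap Y) = 0$ and $r(X|Y) = r(X \cup Y) - r(Y) = 1$.

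For (ii), the key observation is that a hyperplane $Y$ of rank $k-1$ satisfies $r(Y \cup A) \in \{k-1, k\}$ for every $A \subseteq Q$. Nonmodularity again forces $X \not\subseteq Y$, so $r(X \cup Y) = k$, and substituting this into the nonmodularity inequality yields the sharpened bound $r(X \cap Y) \leq r(X) - 2$. Now fix a proper flat $X' \subsetneq X$, so that $r(X') \leq r(X) - 1$. If $X' \not\subseteq Y$, then $r(Y \cup X') = k$ and $r(Y|X') = k - r(X') \geq k - r(X) + 1 > r(Y|X)$. If instead $X' \subseteq Y$, then $X' \subseteq X \cap Y$ as flats, so $r(X') \leq r(X) - 2$, and therefore $r(Y|X') = (k-1) - r(X') \geq k - r(X) + 1 > r(Y|X)$.

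The only place where care is needed is part (ii): one must notice that the strict inequality in nonmodularity improves the bound on $r(X \cap Y)$ from the naive $r(X) - 1$ to $r(X) - 2$, and this extra slack is precisely what salvages strict inequality in the subcase $X' \subseteq Y$, where the bound on $r(Y \cup X')$ is only $k-1$ rather than $k$. Once this refined inequality is in hand, both subcases of (ii) and the single case of (i) close by direct arithmetic.
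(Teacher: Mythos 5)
Your proof is correct and follows essentially the same route as the paper's: both parts come down to the nonmodularity inequality $r(X)+r(Y)>r(XY)+r(X\cap Y)$ together with the case split on whether $X'\subseteq Y$. The only cosmetic difference is that in (ii) you first extract the explicit numeric bound $r(X\cap Y)\le r(X)-2$ and push it through the arithmetic, whereas the paper leaves the inequality chain in the form $r(X)+r(Y)>r(XY)+r(X\cap Y)\ge r(XY)+r(X')$ and reads off $r(Y|X')>r(Y|X)$ directly; these are the same computation.
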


\begin{proof}
    Since $(X,Y)$ are nonmodular, $r(X)+r(Y)>r(XY)+r(X\cap Y)$ and $r(XY)\geq r(Y)+1$. In (i), using that $r(X)=2$ we get $2+r(Y)>r(Y)+1+r(X\cap Y)$, which implies that $r(X\cap Y)=0$ and $r(XY)=r(Y)+1$. In (ii), if $X'\nsubseteq Y$, then $r(X'Y)=k=r(XY)$ and so $r(Y|X')>r(Y|X)$. If  $X'\subseteq Y$, then 
    $r(X)+r(Y)>r(XY)+r(X\cap Y)\geq r(XY)+r(X')$, which implies $r(Y|X')>r(Y|X)$.
\end{proof}

\subsection{$\akp$}
\label{sub: AK opts}

The results in this section show how we reduce the computational cost of applying $\akp$ to the detection of non-algebraic matroids. We achieve this by identifying combinations of flats of the matroid for which the matroid is guaranteed to have an $\akp$ extension. 

\begin{proposition}\label{prop:XYCircuit}
    Let $\Mat=(Q,r)$ be a matroid of rank $k$. Let $X,Y\subseteq Q$ be a disjoint, nonmodular pair of flats of $\Mat$ such that their union is a circuit of $\Mat$. Then $\Mat$ admits an $\akp$ extension $\Mat'$ for $(X,Y)$. 
\end{proposition}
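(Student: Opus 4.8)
The plan is to produce the required $\akp$ extension explicitly, as a single-element extension of $\Mat$ by one new point. First I would record the numerical consequences of the hypotheses: since $X\cap Y=\emptyset$ and $X\cup Y$ is a circuit, both $X$ and $Y$ are proper subsets of that circuit and hence independent, so $r(X)=|X|$, $r(Y)=|Y|$, and $r(XY)=|X|+|Y|-1$; moreover $X\neq\emptyset\neq Y$, since an empty flat cannot occur in a nonmodular pair. In particular $r(X:Y)=r(X)+r(Y)-r(XY)=1$, so by Proposition~\ref{prop:CIFromAK} the $\akp$-information of $(X,Y)$ should be a single point. Accordingly I would set $\Mat'=(QZ,r')$ with $Z=\{z\}$, where $\Mat'$ is the single-element extension of $\Mat$ determined by the modular cut $\mathcal{F}_X$ generated by the flat $X$ --- informally, $z$ is placed freely inside $X$. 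This is a genuine matroid by the point-extension/modular-cut correspondence~\cite[Section~7.2]{Oxley11}, and $r'$ restricts to $r$ on subsets of $Q$.

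Next I would verify conditions (AK1) and (AK2') from Theorem~\ref{thm:AKRedefined}. Condition (AK1) is immediate: $X\in\mathcal{F}_X$, so $r'(Xz)=r(X)$, that is, $r'(z\mid X)=0$. For (AK2') it suffices, by Theorem~\ref{thm:AKRedefined} (or Lemma~\ref{prop:AK2reduced}), to check $r'(X'\mid z)=r'(X'\mid Y)$ for every flat $X'\subseteq X$, and I would split into two cases. If $X'=X$, then $r'(X\mid z)=r(X)-r'(z)=r(X)-1$ and $r'(X\mid Y)=r(XY)-r(Y)=|X|-1=r(X)-1$. If $X'\subsetneq X$, the key observation is that $X$ is an \emph{independent} flat, so $\cl(X')\subsetneq X$ for every proper subset $X'\subsetneq X$: otherwise $r(X')=r(\cl(X'))\geq r(X)>r(X')$, a contradiction. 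Hence $\cl(X')\notin\mathcal{F}_X$, so $r'(X'z)=r(X')+1$ and, since $r'(z)=1$, $r'(X'\mid z)=r(X')$. On the other side, $X'\cup Y$ is a proper subset of the circuit $X\cup Y$, hence independent, so $r(X'Y)=|X'|+|Y|$ and $r'(X'\mid Y)=r(X'Y)-r(Y)=|X'|=r(X')$; the two values agree.

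Having established (AK1) and (AK2'), Theorem~\ref{thm:AKRedefined} (equivalently Lemma~\ref{prop:AK2reduced} together with Definition~\ref{def:NewAK}) gives that $\Mat'$ is an $\akp$ extension of $\Mat$ for $(X,Y)$. I expect the only real subtlety --- the point I would treat most carefully --- to be the claim that placing $z$ freely in $X$ does not inadvertently drop it into a proper subflat of $X$; this is exactly where independence of the flat $X$ enters, and it is also what forces $r'(z)=1$ (since $X$ is a nonempty flat, $\emptyset\notin\mathcal{F}_X$). The remaining verifications are routine substitutions into the rank formula for single-element extensions.
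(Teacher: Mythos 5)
Your proof is correct and follows essentially the same route as the paper's: both use the single-element extension corresponding to the modular cut $\mathcal{F}_X$ and then check (AK1) and (AK2') by splitting into the cases $X'=X$ and $X'\subsetneq X$. You are somewhat more explicit than the paper on the one subtle point — that because $X$ is an independent flat, no proper subflat $X'\subsetneq X$ lies in $\mathcal{F}_X$, so $r'(X'z)=r(X')+1$ — which the paper uses implicitly when it asserts $r(X'x_o)=|X'|+1$.
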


\begin{proof}
    Let $r(X)=\ell$, $r(Y)=m$ and $|XY|=s$. Since $X$ and $Y$ are disjoint, $s = \ell + m$. 
    Now, let $\Mat'=(Qx_o,r)$ be the proper point extension of $\Mat$ corresponding to the modular cut $\mathcal{F}_X$. We show that $\Mat'$ is an $\akp$ extension for $(X,Y)$. We have that $r(x_o|X)=0$ and $r(X|x_o)=\ell-1$. Also, note that 
    \[ r(X|Y)=r(XY)-r(Y)=(|XY|-1)-r(Y)=s-1-m=\ell-1. \]  
    Any $X'\subset X$ is independent. Therefore, for every $X'\subset X$,
    \[ r(X'|x_o)=r(X'x_o)-1=|X'|+1-1=|X'|\text{ and } \] 
    \[ r(X'|Y)=r(X'Y)-m=|X'|+m-m=|X'|, \]
    completing the proof.
\end{proof}

\begin{lemma}\label{lem:AKExtForLines}
    Let $\Mat=(Q,r)$ be a matroid and let $X,Y\subseteq Q$ be a nonmodular pair of flats of $\Mat$ such that $r(X)=2$. Then $\Mat$ admits an $\akp$ extension for $(X,Y)$.
\end{lemma}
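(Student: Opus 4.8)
The plan is to reuse the construction from the proof of Proposition~\ref{prop:XYCircuit}: I would take $\Mat'=(Qx_o,r)$ to be the proper point extension of $\Mat$ corresponding to the principal modular cut $\mathcal{F}_X=\{F:X\subseteq F,\ F\text{ a flat of }\Mat\}$, i.e., the matroid obtained by adding a new point $x_o$ freely on the line $X$. The first thing to record is what nonmodularity of $(X,Y)$ buys us here: since $X$ is a line, Lemma~\ref{lem:linesAndHyps}(i) gives $r(X|Y)=1$ and $r(X\cap Y)=0$. The latter forces every rank-$1$ flat $P\subseteq X$ to satisfy $P\nsubseteq Y$ (otherwise $P\subseteq X\cap Y$ and $1=r(P)\le r(X\cap Y)=0$), so, $Y$ being a flat, $r(PY)=r(Y)+1$ and hence $r(P|Y)=1$ for every point $P$ of $X$; trivially $r(\cl(\emptyset)|Y)=0$.

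Next I would check that $x_o$ sits on $X$ as a genuinely new point. Because every flat in $\mathcal{F}_X$ has rank at least $2=r(X)$, the modular cut $\mathcal{F}_X$ contains neither $\cl(\emptyset)$ nor any rank-$1$ flat, so $x_o$ is neither a loop nor parallel to an element of $Q$. Concretely, $r(x_o|X)=r(Xx_o)-r(X)=0$, which is (AK1); moreover $r(x_o)=1$, $r(Px_o)=r(P)+1=2$ for each point $P\subseteq X$ so $r(P|x_o)=1$, and $r(X|x_o)=r(Xx_o)-r(x_o)=2-1=1$. Then I would verify (AK2): by Lemma~\ref{prop:AK2reduced}, applied in $\Mat$ (note $\cl(X')\subseteq X$ whenever $X'\subseteq X$ since $X$ is a flat), it suffices to check $r(X'|x_o)=r(X'|Y)$ for flats $X'\subseteq X$, and these are exactly $\cl(\emptyset)$, the points $P$ of the line $X$, and $X$ itself. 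For $\cl(\emptyset)$ both sides are $0$; for each point $P$ both sides equal $1$ by the computations above; and for $X$ both sides equal $1$. Hence $\Mat'$ is an $\akp$ extension of $\Mat$ for $(X,Y)$.

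The only delicate point — the obstacle — is ensuring $x_o$ is not parallel to any existing point of $X$, since the equalities $r(P|x_o)=1$ would collapse to $0$ if $x_o$ coincided with such a $P$; this is precisely why one takes the principal modular cut $\mathcal{F}_X$, which excludes all flats of rank $\le 1$. A secondary care point, in contrast with the proof of Proposition~\ref{prop:XYCircuit}, is that subsets of the line $X$ need not be independent (the line may carry parallel classes of size greater than one), so one must argue through closures and flats via Lemma~\ref{prop:AK2reduced} rather than through cardinalities.
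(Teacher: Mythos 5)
Your proof is correct and takes essentially the same approach as the paper: take the proper point extension for the principal modular cut $\mathcal{F}_X$, use Lemma~\ref{lem:linesAndHyps}(i) to get $r(X\cap Y)=0$, and verify (AK2$'$) for the flats of $X$. You are in fact slightly more thorough than the paper, which only explicitly computes the rank-1 case $X'\subset X$, while you also record $X'=\cl(\emptyset)$, $X'=X$, and the fact that $x_o$ is not parallel to any rank-1 flat of $X$.
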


\begin{proof}
    Let $\Mat'=(Qx_o,r)$ be the proper point extension of $\Mat$ corresponding to $\mathcal{F}_X$. By Lemma~\ref{lem:linesAndHyps}, $ r(X|Y)=1$ and $r(X\cap Y)=0$. Hence, for every  flat $X'\subset X$ of rank 1, $r(X'|Y)=1$ and 
    \[ r(X'|x_o) = r(X'x_o) - r(x_o) = 2-1=1\]
    Hence, $\Mat'$ is an $\akp$ extension of $\Mat$ by $x_o$.
\end{proof}

\begin{lemma}\label{lem:YHypAK}
    Let $\Mat=(Q,r)$ be a matroid of rank $k$ and let $X,Y\subseteq Q$ be disjoint flats of $\Mat$. If $Y$ is a hyperplane then $\Mat$ admits an $\akp$ extension for $(X,Y)$.
\end{lemma}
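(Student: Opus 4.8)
\emph{The plan} is to split on whether the pair $(X,Y)$ is modular. If it is, Lemma~\ref{lem: Z iff ModPair} already produces an $\akp$-information inside $Q$ (namely $X\cap Y$), so the trivial extension works and there is nothing to construct. So the substantive case is $(X,Y)$ nonmodular. There I would first record the numerology: since $Y$ is a hyperplane, $X\not\subseteq Y$ (otherwise $r(X)+r(Y)=r(XY)+r(X\cap Y)$ and the pair would be modular), hence $r(XY)=k$ and $r(X|Y)=1$, while nonmodularity forces $r(X\cap Y)\le r(X)-2$. Writing $\ell=r(X)$, $w=r(X\cap Y)$ and $t=\ell-1-w$, this gives $t\ge 1$ and $\ell\ge 2$; when $\ell=2$ the construction below collapses to the single point extension of Lemma~\ref{lem:AKExtForLines}.

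\emph{The construction.} Build $\Mat'$ from $\Mat$ by $t$ successive point extensions: at each step adjoin a new point via the modular cut generated by the current closure of $X$ (for a single step this is the principal extension along $X$, i.e.\ the modular cut $\mathcal{F}_X$ used in the paper). Call the new points $p_1,\dots,p_t$ and set $Z=(X\cap Y)\cup\{p_1,\dots,p_t\}$ (so, up to closure, $Z$ is a generic rank-$(\ell-1)$ flat of $\cl_{\Mat'}(X)$ through $X\cap Y$). I would then verify that $Z$ is an $\akp$-information of $(X,Y)$ in $\Mat'$. Condition (AK1) is immediate, since $X\cap Y$ and every $p_i$ lie in $\cl_{\Mat'}(X)$, so $r(Z|X)=0$. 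For (AK2), Lemma~\ref{prop:AK2reduced} lets me test only flats $X'\subseteq X$ of $\Mat$; for such a flat $r(X'|Y)$ is $0$ if $X'\subseteq X\cap Y$ and $1$ otherwise, so the target reduces to showing $r_{\Mat'}(Z)=\ell-1$ together with $r_{\Mat'}(X'\cup Z)=\ell-1$ when $X'\subseteq X\cap Y$ and $r_{\Mat'}(X'\cup Z)=\ell$ when $X'\not\subseteq X\cap Y$.

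\emph{The key computation.} Everything rests on one fact about free points: if $G$ is a flat of $\Mat$ with $G\subseteq X$ and $r(G)=g$, then $p_1,\dots,p_t$ raise the rank of $G$ one step at a time until it reaches $\ell$, so $r_{\Mat'}(G\cup\{p_1,\dots,p_t\})=\min(g+t,\ell)$. Applying this with $G=X\cap Y$ gives $r_{\Mat'}(Z)=\min(w+t,\ell)=\ell-1$. For a flat $X'\subseteq X$, I would replace $X'\cup(X\cap Y)$ by its $\Mat$-closure $F$, a flat contained in $X$ with the same rank once the $p_i$ are adjoined (using that $\Mat$ is a restriction of $\Mat'$), so $r_{\Mat'}(X'\cup Z)=\min(r(F)+t,\ell)$. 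If $X'\subseteq X\cap Y$ then $F=X\cap Y$ and this equals $\ell-1$; if $X'\not\subseteq X\cap Y$ then $F\supsetneq X\cap Y$, hence $r(F)\ge w+1=\ell-t$ and the minimum is $\ell$. This is exactly (AK2); and since $\Mat'$ was built by point extensions, the output is a matroid extension, slightly stronger than required.

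\emph{The hard part} will be organizational rather than conceptual. I expect the friction to lie in making the iterated free extension precise (``freely on $X$'' has to mean ``freely on the current closure of $X$'' at each of the $t$ steps, because the earlier $p_i$ enlarge that closure), in establishing the resulting rank formula for iterated free extensions, and in applying it to sets that mix elements of $Q$ with the new points; the step that replaces $X'\cup(X\cap Y)\cup\{p_1,\dots,p_t\}$ by $F\cup\{p_1,\dots,p_t\}$ is where $\Mat=\Mat'\setminus\{p_1,\dots,p_t\}$ gets used. I would also keep an eye on the degenerate ranges ($\ell\ge 2$, $0\le w\le\ell-2$, $t\ge 1$) so the construction never collapses to something vacuous.
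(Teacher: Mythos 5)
Your proof is correct and follows the same broad strategy the paper uses: iterate principal (free) extensions along $X$ and take the resulting points as the $\akp$-information. There is, however, one substantive difference, and it is not cosmetic. The paper adjoins $c=r(X)-1$ free points $e_1,\dots,e_c$ via the modular cuts $\cF_X,\cF_{Xe_1},\dots$, sets $Z=e_1\cdots e_c$, and concludes $r(X'|Z)=r(X)-c=1=r(X'|Y)$ for every flat $X'\subseteq X$. That identity fails whenever $X\cap Y$ has positive rank: nonmodularity with $Y$ a hyperplane only forces $r(X\cap Y)\le r(X)-2$, so flats $X'\subseteq X\cap Y$ of rank $\ge 1$ can exist, and for such $X'$ one has $r(X'|Y)=0$ while a generically placed $Z\subseteq\cl(X)$ gives $r(X'|Z)=1$. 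Your construction $Z=(X\cap Y)\cup\{p_1,\dots,p_t\}$ with $t=r(X)-1-r(X\cap Y)$ is exactly the repair: forcing $Z$ through $X\cap Y$ yields $r(X'|Z)=0$ whenever $X'\subseteq X\cap Y$, and your formula $r_{\Mat'}\bigl(F\cup\{p_1,\dots,p_t\}\bigr)=\min(r(F)+t,\,r(X))$ gives $r(X'|Z)=1$ in all remaining cases, matching $r(X'|Y)$. The two constructions coincide only when $r(X\cap Y)=0$, which does hold in the disjoint setting where Proposition~\ref{prop:AKSpecialRankGeneral} invokes this lemma, so the downstream applications are unaffected; but as a proof of the lemma as stated, yours is the one that closes the gap.

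Two small points worth keeping straight if you write this up. First, the split on modularity at the start should be stated slightly more carefully: $X\subseteq Y$ automatically makes $(X,Y)$ modular, so the nonmodular case really does give $r(XY)=k$ and $r(X|Y)=1$ as you say. Second, your rank formula for the iterated principal extension needs the observation (which you flag) that each step is the principal extension along the \emph{current} closure $\cl_{\Mat_i}(X)=Xp_1\cdots p_i$, together with Lemma~\ref{lem:ClosureExtension} to justify replacing $X'\cup(X\cap Y)$ by its $\Mat$-closure $F$ before applying the formula; once both are in place the computation is exactly as you describe.
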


\begin{proof}
    Observe that the rank of the $\akp$-information of $(X,Y)$ is 
    \[ r(X;Y)=r(X)+r(Y)-r(XY)=r(X)+k-1-k=r(X)-1. \]
    Since $\akp$ extensions always exist for modular pairs (see Lemma~\ref{lem: Z iff ModPair}),  
    the case $r(X)=2$ is trivial. Suppose that $r(X)>2$. 
    Consider the matroid $(Qe_1,r)$ corresponding to the modular cut $\flag_X$. Observe that $\cl(X_1e_1)=Xe_1$ for every flat $X_1\subset X$ such that $r(X_1)=r(X)-1$. Next, consider the matroid $(Qe_1e_2,r)$ corresponding to the modular cut $\flag_{Xe_1}$ and see that $\cl(X_2e_1e_2)=Xe_1e_2$ for every flat $X_2\subset X$ such that $r(X_2)=r(X)-2$. It is clear that one can continue this until we have $\cl(X_ce_1e_2\ldots e_c)=Xe_1e_2\ldots e_c$ for $c=r(X;Y)$ and for every flat $X_c\subset X$ such that $r(X_c)=r(X)-c$. Now, set $Z=e_1e_2\ldots e_c$. We have that, for every $X' \subseteq X$, $r(Z|X)=0$ and $r(X'|Z)=r(X'Z)-r(Z)=r(X)-r(Z)=r(X)-c=1=r(X'|Y)$. Hence, $(QZ,r)$ is an $\akp$ extension for $(X,Y)$.
\end{proof}

\begin{lemma}\label{lem:2HypsImAK}
       Let $\Mat=(Q,r)$ be a matroid and let $H_1,H_2\subseteq Q$ be hyperplanes of $\Mat$. Then $\Mat$ admits a $\cip$ extension for $(H_1,H_2)$. 
\end{lemma}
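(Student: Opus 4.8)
The plan is to realize the common information of $(H_1,H_2)$ by a short chain of single-element (principal) matroid extensions that enlarge the intersection of $H_1$ and $H_2$ one rank at a time, until it reaches rank $I(H_1:H_2)=r(H_1)+r(H_2)-r(H_1H_2)$, and then to exhibit this enlarged intersection as the $\cip$-information.

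I would first dispose of the modular case. If $(H_1,H_2)$ is a modular pair --- in particular if $H_1=H_2$, or if $k\le 2$ --- then $Z:=H_1\cap H_2$ already satisfies (CI1) and (CI2) inside $\Mat$, because $r\bigl((H_1\cap H_2)H_i\bigr)=r(H_i)$ and, by modularity, $r(H_1\cap H_2)=I(H_1:H_2)$. So assume $H_1\ne H_2$ and $(H_1,H_2)$ nonmodular; then $r(H_1H_2)=k$, and writing $t=r(H_1\cap H_2)$ submodularity gives $t\le k-2$, with equality excluded by nonmodularity, so $t\le k-3$.

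The core of the argument is a single extension step: if $\Mat'=(Q',r)$ is a matroid of rank $k$ with two \emph{distinct, nonmodular} hyperplanes $G_1,G_2$, then $\mathcal{F}:=\mathcal{F}_{G_1}\cup\mathcal{F}_{G_2}$ is a modular cut of $\Mat'$. Monotonicity holds because each $\mathcal{F}_{G_i}$ is an up-set of flats. For closure under intersections of modular pairs, take a modular pair $F,F'\in\mathcal{F}$; if both belong to the same $\mathcal{F}_{G_i}$ then $F\cap F'\supseteq G_i$ and we are done, so we may assume $G_1\subseteq F$, $G_1\nsubseteq F'$, $G_2\subseteq F'$, $G_2\nsubseteq F$. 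Since $G_1$ is a hyperplane contained in the flat $F$, either $F=G_1$ or $F=Q'$, and similarly for $F'$; the subcase $F=G_1,F'=G_2$ cannot occur because $(G_1,G_2)$ is nonmodular, and in every remaining subcase one of $F,F'$ equals $Q'$, so $F\cap F'$ is the other flat and lies in $\mathcal{F}$. Let $z$ be the point of the single-element extension $\Mat''=(Q'z,r)$ determined by $\mathcal{F}$. Then $z\in\cl_{\Mat''}(A)$ iff $\cl_{\Mat'}(A)\in\mathcal{F}$; hence $z\in\cl(G_1)\cap\cl(G_2)$, so $\cl_{\Mat''}(G_i)=G_i\cup\{z\}$ is again a hyperplane with trace $G_i$ on $Q'$ (hence $\cl_{\Mat''}(G_1)\ne\cl_{\Mat''}(G_2)$), while $z\notin\cl(G_1\cap G_2)$ because $G_1\cap G_2$, containing neither $G_1$ nor $G_2$, is not in $\mathcal{F}$; therefore $r\bigl(\cl_{\Mat''}(G_1)\cap\cl_{\Mat''}(G_2)\bigr)=r\bigl((G_1\cap G_2)\cup\{z\}\bigr)=r(G_1\cap G_2)+1$.

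To finish, I would iterate this step $j:=k-2-t$ times, starting from $\Mat$ with $(G_1,G_2)=(H_1,H_2)$ and, after each extension, replacing $G_i$ by its closure in the new matroid; at every stage the current hyperplanes are still nonmodular, since their intersection has rank $t$ plus the number of steps taken so far, which is $\le k-3<k-2$, so the single-step lemma applies throughout. Writing $\Mat'=(QZ',r)$ for the final matroid and $Z'=\{z_1,\dots,z_j\}$ for the new points, an easy induction (using that closures only grow under extension) shows $Z'\subseteq\cl_{\Mat'}(H_1)\cap\cl_{\Mat'}(H_2)$, so $r(H_iZ')=r(H_i)=k-1$; and $Z:=(H_1\cap H_2)\cup Z'$ has rank $t+j=k-2$. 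Consequently $r(Z|H_i)=r(H_iZ)-r(H_i)=0$ for $i=1,2$ and $r(Z)=k-2=I(H_1:H_2)$, so $(QZ',r)$, together with the set $Z$, is a $\cip$ extension of $\Mat$ for $(H_1,H_2)$ (and in fact a matroid one). The main obstacle is the single-step lemma --- specifically the case analysis showing $\mathcal{F}_{G_1}\cup\mathcal{F}_{G_2}$ is closed under intersections of modular pairs --- together with the bookkeeping that keeps the two hyperplanes distinct and nonmodular and makes the intersection rank grow by exactly one at each step.
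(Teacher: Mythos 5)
Your proof is correct, and it proceeds in essentially the same spirit as the paper's --- both use the modular cut generated by the two hyperplanes to build a principal extension with a new point on both of them --- but you push the argument substantially further than the paper does. The paper's proof stops after a single point extension: it forms the modular cut $\mathcal{F}=\{H_1,H_2\}$ (implicitly $\{H_1,H_2,Q\}$), extends by one element $z$, and declares the result a $\cip$ extension. That conclusion only holds when $r(H_1\cap H_2)=k-3$, for then $Z=(H_1\cap H_2)\cup\{z\}$ has rank $k-2=I(H_1:H_2)$. When $r(H_1\cap H_2)<k-3$ --- which happens already for disjoint hyperplanes of a uniform matroid $U_{4,n}$ with $n\ge 6$, say --- a single added point cannot raise the rank of the intersection enough, and the paper's argument, read literally, is incomplete. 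You identify precisely this issue (via $t\le k-3$ with possible strict inequality) and repair it by iterating the extension $j=k-2-t$ times, re-verifying at each step that the two hyperplane closures remain distinct and nonmodular so the next modular cut is still available, and finally exhibiting $Z=(H_1\cap H_2)\cup\{z_1,\dots,z_j\}$ as the $\cip$-information. Your case analysis showing $\mathcal{F}_{G_1}\cup\mathcal{F}_{G_2}$ is a modular cut is correct and is exactly the content behind the paper's terse ``$\mathcal{F}$ is the smallest modular cut generated by $H_1$ and $H_2$.'' In short: same key device, but your version is the one that actually closes the argument for all ranks of $H_1\cap H_2$. You could shorten the modular case slightly, as you yourself note, since $Z=H_1\cap H_2$ already satisfies (CI1) and (CI2) with no extension at all.
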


\begin{proof}
    If $H_1$ and $H_2$ are modular, then it is enough to take the extension corresponding to the modular cut generated by their intersection. In the case where they are nonmodular, take $\mathcal{F} = \{H_1,H_2, Q \}$.    
    Since $H_1$ and $H_2$ are nonmodular hyperplanes of $\Mat$, then $\mathcal{F}$ is the smallest modular cut of $\Mat$ generated by $H_1$ and $H_2$, and hence, $\Mat$ admits a proper point extension corresponding to $\mathcal{F}$, and therefore, a $\cip$ extension for $(H_1,H_2)$. 
\end{proof}

This next result takes into account the following fact. If $X$ and $Y$ are flats of a sparse paving matroid of rank $k$ such that $r(X) + r(Y) \leq k$, then $(X,Y)$ is a modular pair. While the result only applies to sparse paving matroids, its importance comes from the fact that sparse paving matroids are conjectured to predominate in any asymptotic enumeration of matroids~\cite{MaWe13,MNWW11,PeVdP14}. Hence, results applying to such matroids will affect almost all matroids, asymptotically.

\begin{proposition}\label{prop:AKSpecialRankGeneral}
     Let $\Mat=(Q,r)$ be a sparse paving matroid of rank $k$. Let $X,Y\subseteq Q$ be a disjoint, nonmodular pair of flats of $\Mat$ such that $r(X)+r(Y)=k+1$ and $X$ is not a circuit-hyperplane. Then $\Mat$ admits an $\akp$ extension with respect to $(X,Y)$.
\end{proposition}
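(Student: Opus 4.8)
The plan is to realise the required $\akp$ extension as a single proper point extension of $\Mat$. Write $\ell=r(X)$ and $m=r(Y)$, so the hypothesis reads $\ell+m=k+1$; since $X,Y$ are disjoint we also have $r(X\cap Y)=0$. If $\ell=2$ (equivalently $m=k-1$) the claim is immediate from Lemma~\ref{lem:AKExtForLines}, so I may assume $3\le\ell\le k-1$, and hence $2\le m\le k-2$. Since $\Mat$ is sparse paving, every flat of rank at most $k-2$ is an independent set of that size, and a hyperplane that is not a circuit-hyperplane is an independent set of size $k-1$; so $X$ and $Y$ are independent with $|X|=\ell$, $|Y|=m$, and therefore $|XY|=\ell+m=k+1$. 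Removing any point from $XY$ leaves a $k$-element set, which by sparse pavingness is either independent (so $r(XY)=k$) or a circuit-hyperplane, i.e.\ a flat of rank $k-1$ (so re-adjoining the removed point gives $r(XY)=k$); in either case $r(XY)=k$. Consequently the $\akp$-information of $(X,Y)$ would have rank $r(X)+r(Y)-r(XY)=1$, so it is enough to construct a single point $z$.

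The crux is the choice of the modular cut. As $|XY|-r(XY)=1$, the set $XY$ contains a unique circuit $C$; by pavingness $|C|\in\{k,k+1\}$, so $C$ is either $XY$ itself (the situation of Proposition~\ref{prop:XYCircuit}) or a circuit-hyperplane. I would put $X^{*}=X\cap C$ and take $\Mat'=(Qz,r)$ to be the proper point extension of $\Mat$ determined by the modular cut $\mathcal F_{X^{*}}$. Note $X^{*}$ is a flat of $\Mat$: it equals $X$ when $C=XY$, and it is an intersection of flats when $C$ is a circuit-hyperplane; and $X^{*}\ne\emptyset$ (it has size $\ell$ or $\ell-1$), so $z$ is not a loop and $r(z)=1$.

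To finish I would verify conditions (AK1) and (AK2') of Theorem~\ref{thm:AKRedefined} for $(X,Y)$ in $\Mat'$. Since $X^{*}\subseteq X$ we get $X\in\mathcal F_{X^{*}}$, so $r_{\Mat'}(Xz)=r(X)$, i.e.\ $r(z|X)=0$, which is (AK1). For (AK2'), fix a flat $X'\subseteq X$; because $X$ is independent, $X'$ is independent with $\cl_{\Mat}(X')=X'$, and $X'Y$ is a disjoint union with $|X'Y|=|X'|+m$. Writing $C=X^{*}\cup(C\cap Y)$ and noting $C\cap Y\subseteq Y\subseteq X'Y$, one sees $C\subseteq X'Y$ if and only if $X^{*}\subseteq X'$. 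If $X^{*}\subseteq X'$, then $X'Y$ is dependent and, being contained in the nullity-one set $XY$, has nullity exactly one, so $r(X'|Y)=|X'|-1$; if $X^{*}\not\subseteq X'$, then $X'Y$ contains no circuit (the only circuit inside $XY$ is $C$), so $r(X'|Y)=|X'|$. On the other side, $r_{\Mat'}(X'z)$ equals $r(X')$ when $X'=\cl_{\Mat}(X')\in\mathcal F_{X^{*}}$, i.e.\ when $X^{*}\subseteq X'$, and $r(X')+1$ otherwise; hence $r(X'|z)=|X'|-1$ exactly when $X^{*}\subseteq X'$ and $|X'|$ otherwise. The two sides agree in every case, so (AK2') holds, and since (AK2) is equivalent to (AK2') (Lemma~\ref{prop:AK2reduced}), $\Mat'$ is an $\akp$ extension of $\Mat$ for $(X,Y)$.

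I expect the only real obstacle to be the choice of $X^{*}$. The obvious candidate, the extension along $\mathcal F_{X}$, succeeds only when no set of the form $(X\sss\{p\})\cup Y$ is a circuit-hyperplane; in general one must instead cut down to $X^{*}=X\cap C$, and the heart of the verification is the observation that $r(\,\cdot\,|Y)$ drops below cardinality on a subflat $X'\subseteq X$ precisely when $X'\supseteq X^{*}$, which is mirrored exactly by the extension along $\mathcal F_{X^{*}}$. The hypothesis that $\Mat$ is sparse paving is what forces $r(XY)=k$ (so a single point suffices and $XY$ has a unique circuit), and the hypothesis that $X$ is not a circuit-hyperplane is what makes $X$ — and hence every subset of $X$ — a flat on which $r$ coincides with cardinality, keeping the bookkeeping clean.
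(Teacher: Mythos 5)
Your proof is correct and takes essentially the same approach as the paper: both reduce to the unique circuit $C$ inside $XY$ (after disposing of the line/hyperplane boundary cases and the $C=XY$ case) and use the modular cut $\mathcal F_{X\cap C}$ to build the single-point extension; your $X^*$ is the paper's $X_1$. Your write-up is slightly more streamlined in packaging the key step as "$C\subseteq X'Y$ iff $X^*\subseteq X'$," but the construction and verification are the same.
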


\begin{proof}
    First, note that if $Y$ is a hyperplane (resp. $X$ is a line), then Lemma~\ref{lem:YHypAK} (resp.~\ref{lem:AKExtForLines}) applies. Therefore, since $r(X) + r(Y) = k + 1$, we have $r(X), r(Y) \leq k - 2$. Hence $X$ and $Y$ are independent because $M$ is paving.

    Since $X$ and $Y$ are disjoint independent sets, then $|XY| = |X| + |Y| = k+1$, and since all sets of size greater than $k$ have rank $k$, then $r(XY)=k$.

    Any two subsets $Z_1, Z_2\subset XY$ with $|Z_1|=|Z_2|=k$ have $|Z_1\cap Z_2|=k-1$. Then, by Lemma~\ref{lem:sprspvngcirhyps}, there is at most one circuit-hyperplane in $XY$. If $XY$ is a circuit, then Prop.~\ref{prop:XYCircuit} applies.  
    
    Now consider the case that $XY$ contains a circuit-hyperplane. Let $X_1\subseteq X$ and $Y_1\subseteq Y$ be such that $X_1Y_1$ is a circuit-hyperplane. Without loss of generality, let $X_1 \subset X$ and $Y_1 = Y$. Let $(Qx_o,r)$ be the extension of $\Mat$ corresponding to $\mathcal{F}_{X_1}$. It is clear that (AK1) is trivially satisfied. For (AK2), first, we have 
    $r(X|x_o)=r(X)-1=r(X|Y)$. And for all $X'\subset X$ such that $X'\not=X_1$, we have that, by independence of $X'x_0$ and $X'Y$, $r(X'|x_o)=|X'|=r(X'|Y)$, since no such $X'$ is in $\mathcal{F}_{X_1}$. And finally, for $X_1\subset X$, first note that $r(X_1)=r(X)-1$ and so
    $r(X_1|x_o)=r(X)-2$. Then, observe that $r(X_1|Y)=k-1-r(Y)=r(X)-2$, which concludes the proof.
\end{proof}

\fullv{Proposition~\ref{prop:AKSpecialRankGeneral} has an interesting implication for the TTT matroids introduced in~\cite{BFP23}. If $\Mat$ is a TTT matroid of rank 5 for which the only sets that break $\cip$ are the pairs $(r_i,r_j)$ and $(\ell_i,\ell_j)$ for $i,j=1,2,3$ and $i\not = j$ (see~\cite[Section 4.3]{Bam21} for details of this description), then the matroid is 1-$\akp$. This is because these pairs of flats are such that $r(X)=r(Y)=3$, $X\cap Y=\emptyset$ and $r(X) + r(Y) = k + 1 = 6$.}

From~\cite[Lemma 3.20]{Bam21}, we know that, to check if a matroid $\Mat$ satisfies $\cip$, it is enough to take pairs $(X,Y)$ where $X$ and $Y$ are flats of the matroid. And from~\cite[Proposition 3.15]{FKMP20}, we know that a $\cip$ extension of $\Mat$ for $(X,Y)$ is also an $\akp$ extension of $\Mat$ for the same pair.

Now, if a matroid satisfies the generalized Euclidean intersection ($\gep$) property, then it also satisfies $\cip$. This was first observed in~\cite{BFP23} and we formalize it next.

\begin{proposition}\label{prop:GPImpliesCI}
    Let $\Mat=(Q,r)$ be a matroid. For any nonmodular pair of flats $(X,Y)$ of $\Mat$, if $\Mat$ admits a $\gep$ extension for $(X,Y)$, then it also admits a $\cip$ extension for the same pair.
\end{proposition}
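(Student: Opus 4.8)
The plan is to check that the given $\gep$-extension for $(X,Y)$ already contains a $\cip$-extension for the same pair, so that no fresh construction is needed. Write $\Mat'$ for the given $\gep$-extension of $\Mat$. Its defining property makes $\cl_{\Mat'}(X)$ and $\cl_{\Mat'}(Y)$ a modular pair in $\Mat'$; set $T:=\cl_{\Mat'}(X)\cap\cl_{\Mat'}(Y)$. Since $T\subseteq\cl_{\Mat'}(X)$ and $T\subseteq\cl_{\Mat'}(Y)$, and since an extension preserves the rank of every subset of $Q$, we get $r(T|X)=r(T|Y)=0$, which is condition (CI1) of Definition~\ref{def:CI}. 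So everything reduces to (CI2), namely $r(T)=r(X:Y)=r(X)+r(Y)-r(XY)$; once that is in hand, $\Mat'$, with $T$ playing the role of $\cip(X,Y)$, is a $\cip$-extension for $(X,Y)$.

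For (CI2) I would argue as in the proof of Lemma~\ref{lem: Z iff ModPair}. By modularity of $\cl_{\Mat'}(X)$ and $\cl_{\Mat'}(Y)$ in $\Mat'$, $r(T)=r(\cl_{\Mat'}(X))+r(\cl_{\Mat'}(Y))-r(\cl_{\Mat'}(X)\cup\cl_{\Mat'}(Y))$. These three ranks equal, respectively, $r(X)$, $r(Y)$, $r(XY)$: taking closures preserves rank, an extension preserves the rank of every subset of $Q$, and the chain $XY\subseteq\cl_{\Mat'}(X)\cup\cl_{\Mat'}(Y)\subseteq\cl_{\Mat'}(XY)$ sandwiches the union-term between two sets of rank $r(XY)$. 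Hence $r(T)=r(X)+r(Y)-r(XY)=r(X:Y)$, which is (CI2). (Should the definition of a $\gep$-extension instead be phrased through an adjoined flat $Z$ together with a condition on the subflats of $X$, the same conclusion follows from the $X'=X$ instance of that condition, which gives $r(X|Z)=r(X|Y)$; then $r(Z|X)=0$ forces $r(XZ)=r(X)$, so $r(Z)=r(XZ)-r(X|Z)=r(X)-r(X|Y)=r(X)+r(Y)-r(XY)$, and one is back in the same situation.)

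The only delicate point --- and the main obstacle --- is exactly this rank bookkeeping inside the extension: one must verify that replacing $X$, $Y$, $XY$ by $\cl_{\Mat'}(X)$, $\cl_{\Mat'}(Y)$, $\cl_{\Mat'}(X)\cup\cl_{\Mat'}(Y)$ leaves their ranks unchanged, and confirm that the defining conditions of a $\gep$-extension genuinely make $X$ and $Y$ a modular pair in $\Mat'$ (equivalently, that they supply a flat of rank exactly $r(X:Y)$ lying inside both $\cl_{\Mat'}(X)$ and $\cl_{\Mat'}(Y)$). Granting that, nothing beyond submodularity and the definitions of $\gep$-extension and $\cip$-extension is required.
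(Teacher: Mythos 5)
The paper itself offers no proof of Proposition~\ref{prop:GPImpliesCI}: it is stated and attributed to~\cite{BFP23}, and the $\gep$ property is never even defined in this paper. So you are not at fault for having to guess the definition, but the guess you make is precisely where the argument breaks.

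Your entire proof hinges on the opening claim that the defining property of a $\gep$-extension ``makes $\cl_{\Mat'}(X)$ and $\cl_{\Mat'}(Y)$ a modular pair in $\Mat'$.'' You flag this yourself as the delicate point, and it is indeed the gap. The generalized Euclidean intersection property, in the tradition of Bachem--Wanka~\cite{BaWa89} and as used in~\cite{Bam21,BFP23}, is a \emph{single-point} extension property: a $\gep$-extension for $(X,Y)$ is a proper point extension $(Qe,r)$ with $e\in\cl_{Qe}(X)\cap\cl_{Qe}(Y)$ and $e\notin\cl_{Qe}(X\cap Y)$, obtained from the modular cut $\flag_X\cup\flag_Y$. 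Adding one such point raises $r(\cl(X)\cap\cl(Y))$ by exactly one; it makes $(X,Y)$ modular only if the original modular deficiency $r(X)+r(Y)-r(XY)-r(X\cap Y)$ was exactly $1$. When that deficiency is larger, the pair is still nonmodular in $\Mat'$, your set $T=\cl_{\Mat'}(X)\cap\cl_{\Mat'}(Y)$ has $r(T)<r(X:Y)$, and (CI2) fails. Once this assumption is removed, everything downstream --- the bookkeeping that $r(\cl(X))=r(X)$, $r(\cl(Y))=r(Y)$, $r(\cl(X)\cup\cl(Y))=r(XY)$ --- is fine but leads nowhere. The real content of the proposition is showing that \emph{once one} $\gep$-extension exists, the process can be iterated (or, equivalently, that the free modular-cut extension by a full set of $r(X:Y)$ points exists), and that this iteration terminates with a $\cip$-extension; that is the step you would need to supply and currently do not address.

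Your parenthetical fallback, phrasing $\gep$ via a flat $Z$ with conditions on subflats of $X$, has the additional problem that it yields $r(Z)=r(X:Y)$ and $r(Z|X)=0$ but never establishes $r(Z|Y)=0$, so (CI1) is unproved there too. In short: the modularity assumption is not a routine ``delicate point'' to verify later --- it is the theorem, and the proof as written assumes it.
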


Thus, taking this into account, we see that checking if a matroid satisfies the $\akp$ property can be done even more efficiently by first eliminating the pairs for which the matroid admits a $\gep$ extension. In our experience, this leaves only a very limited number of pairs to check. Then, where possible, one can now apply the other results shown here to these remaining pairs. To illustrate just how much time is saved using this approach, we compare 3 different approaches used to check 1-$\akp$ for the $(5,9)$ matroid with Bollen identifier\footnote{This refers to the way these matroids are organized in the computer programs available at \href{https://github.com/gpbollen/Algebraicity-of-Matroids-and-Frobenius-Flocks/blob/master/Matroid Encyclopedia.ipynb}{https://github.com/gpbollen/Algebraicity-of-Matroids-and-Frobenius-Flocks/blob/master/Matroid Encyclopedia.ipynb}.} $100736$ in Table~\ref{tab:AKTimeTaken}.

\begin{longtblr}[
  caption = {Time taken to check $\akp$ for $(5,9)$ matroid $100736$},
  entry = {Short Caption},
  label = {tab:AKTimeTaken},
]{
  colspec = {ccc}, hlines, 
  row{1} = {font=\bfseries},
}
 $3$-set $\akp$   & $2-$set $\akp$   & $2-$set $\akp$ with $\gep$ Heuristic    \\
$\approx 15$ hours & $\approx 1$ hour & $<1$ minute  \\
\end{longtblr}

\subsection{$\dlp$}
\label{sub: DL opts}

In the case of the $\dlp$ property, we note that, interestingly, all the optimizations shown above for $\akp$ also apply. 
%
%
We state and prove those results here, starting with the $\dlp$ counterpart for Prop.~\ref{prop:XYCircuit}.

\begin{proposition}\label{prop:XYCircuitDL}
    Let $\Mat=(Q,r)$ be a matroid and let $X,Y\subseteq Q$ be a disjoint nonmodular pair of flats such that $XY$ is a circuit. Then $X$ is a quasi-intersection of $(X,Y)$.
\end{proposition}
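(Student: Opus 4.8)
The plan is to verify directly that $X$ satisfies the two defining conditions of a quasi-intersection for the pair $(X,Y)$ from Definition~\ref{def:QI}: namely (DL1) $r(X\mid X)=0$, and (DL2) $r(X\mid X')=0$ iff $r(Y\mid X')=r(Y\mid X)$ for every flat $X'\subseteq X$. Condition (DL1) is immediate since $r(X\mid X)=r(X)-r(X)=0$. So the real content is (DL2), and since $X$ is itself a flat contained in $X$, the map $X'\mapsto X'$ makes the forward direction of (DL2) trivially reduce to checking: for a flat $X'\subsetneq X$, one has $r(X\mid X')>0$ (which is automatic, as $X$ is a flat strictly containing the flat $X'$, so $r(X)>r(X')$), and we must show this is accompanied by $r(Y\mid X')>r(Y\mid X)$. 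In other words, the crux is: for every proper flat $X'\subsetneq X$, $r(YX')-r(X') > r(YX)-r(X)$.

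The key computation will use that $XY$ is a circuit together with the disjointness. Write $\ell=r(X)$, $m=r(Y)$; disjointness and $XY$ being a circuit give $|XY|=|X|+|Y|$ and $r(XY)=|XY|-1$, while independence of $X$ and of $Y$ (as subsets of the circuit $XY$) gives $|X|=\ell$, $|Y|=m$, hence $r(XY)=\ell+m-1$. Thus $r(Y\mid X)=r(XY)-r(X)=(\ell+m-1)-\ell=m-1$. Now take any proper flat $X'\subsetneq X$; then $X'$ is independent (a proper subset of the circuit $XY$, hence independent), so $r(X')=|X'|<\ell$. The set $X'Y$ is a proper subset of the circuit $XY$ unless $X'Y=XY$, which cannot happen since $X'\subsetneq X$ and the union is disjoint; hence $X'Y$ is independent and $r(X'Y)=|X'|+m$. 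Therefore $r(Y\mid X')=r(X'Y)-r(X')=(|X'|+m)-|X'|=m$. Since $m>m-1$, we get $r(Y\mid X')=m>m-1=r(Y\mid X)$, establishing exactly the implication needed for (DL2); the converse implication of (DL2) (that $r(Y\mid X')=r(Y\mid X)$ forces $r(X\mid X')=0$) holds vacuously among proper flats $X'\subsetneq X$ because we have just shown no proper flat achieves $r(Y\mid X')=r(Y\mid X)$, while for $X'=X$ it is the trivial (DL1) statement.

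I do not expect a genuine obstacle here — the argument is essentially the same circuit-rank bookkeeping already used in the proof of Proposition~\ref{prop:XYCircuit}, repackaged for the $\dlp$ conditions. The only point requiring a little care is making sure every proper flat $X'\subsetneq X$ is genuinely independent and that $X'Y$ stays inside the circuit $XY$ as a proper (hence independent) subset; both follow from $X,Y$ disjoint with $X\cup Y$ a circuit. A minor subtlety worth stating explicitly is that flats $X'$ that are not subsets of $X$ are irrelevant, since Definition~\ref{def:QI} only quantifies over flats $X'\subseteq X$, and that (DL2)'s "if and only if" is to be read over that restricted range, where it is witnessed entirely by the computation $r(Y\mid X')\in\{m-1,m\}$ according as $X'=X$ or $X'\subsetneq X$.
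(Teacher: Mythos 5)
Your proof is correct and follows essentially the same route as the paper's: both compute $r(Y\mid X)=r(Y)-1$ from the circuit identity $r(XY)=r(X)+r(Y)-1$, and both show that for every proper flat $X'\subsetneq X$ the set $X'Y$ is a proper subset of the circuit $XY$, hence independent, giving $r(Y\mid X')=r(Y)>r(Y\mid X)$. You spell out the bookkeeping for (DL1) and the biconditional in (DL2) more explicitly, but the underlying argument is identical.
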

\begin{proof}
    Here, $r(XY)=r(X)+r(Y)-1$, and so $r(Y|X)=r(Y) + r(X) - 1 - r(X) = r(Y) - 1$. And for every flat $X'\subset X$, $X'Y$ is independent, hence we have $r(Y|X')=r(Y) + r(X') - r(X') = r(Y) > r(Y|X)$, completing the proof.
\end{proof}

This next one combines Lemmas~\ref{lem:AKExtForLines},~\ref{lem:YHypAK} and~\ref{lem:2HypsImAK} into one result for $\dlp$.

\begin{lemma}\label{lem:DLGuaranteed}
    Let $X$ and $Y$ be nonmodular flats of a matroid $\Mat=(Q,r)$. If $X$ is a line or $Y$ is a hyperplane,
    then $X$ is a quasi-intersection of $(X,Y)$ in~$\Mat$.
\end{lemma}
\begin{proof}
By Lemma~\ref{lem:linesAndHyps} (i), if $X$ is a line then $r(X\cap Y)=0$. Hence $r(X'Y)=r(XY)=r(Y)+1$ for every non-trivial flat $X'\subset X$ and $r(Y|X')>r(Y|X)$, completing the proof for when $X$ is a line. The proof for when $Y$ is a hyperplane follows immediately from Lemma~\ref{lem:linesAndHyps} (ii).
\end{proof}

Finally, we have the $\dlp$ counterpart for Prop.~\ref{prop:AKSpecialRankGeneral}.

\begin{proposition}\label{prop:DLSpecialRankGeneral}
    Let $\Mat=(Q,r)$ be a sparse paving matroid of rank $k$ and let $X,Y\subseteq Q$ be a disjoint nonmodular pair of flats. If $r(X)+r(Y)=k+1$ and $X$ is not a circuit-hyperplane, then there is a quasi-intersection of $(X,Y)$ in $\Mat$.
\end{proposition}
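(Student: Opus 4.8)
The plan is to mirror the case analysis in the proof of Proposition~\ref{prop:AKSpecialRankGeneral}, but to exhibit the quasi-intersection explicitly as a flat of $\Mat$ rather than passing to an extension. First I would clear away the degenerate subcases. Since $r(X)+r(Y)=k+1$, a flat of rank $k$ among $X,Y$ would equal $Q$, making the pair modular; hence $r(X)\le k-1$ and $r(Y)\le k-1$, so $r(Y)=k+1-r(X)\ge 2$ and $r(X)\ge 2$. If $r(X)=2$ then $X$ is a line (equivalently $Y$ is a hyperplane) and Lemma~\ref{lem:DLGuaranteed} already gives that $X$ is a quasi-intersection. So it remains to treat $3\le r(X)\le k-1$ and $2\le r(Y)\le k-2$. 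In this range $Y$ has rank at most $k-2$, hence is independent with $|Y|=r(Y)$; and $X$ is a flat of rank at most $k-1$ that is not a circuit-hyperplane, hence independent with $|X|=r(X)$. As $X$ and $Y$ are disjoint, $|XY|=r(X)+r(Y)=k+1$, whence $r(XY)=k$.

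The structural ingredient, exactly as for $\akp$, is that $XY$ contains at most one circuit-hyperplane: such a circuit-hyperplane would have size $k=|XY|-1$, hence be of the form $XY\setminus\{p\}$, and two distinct ones would intersect in $k-1$ elements, contradicting Lemma~\ref{lem:sprspvngcirhyps}. Now I would split into cases. If $XY$ contains no circuit-hyperplane, then every $k$-element subset of $XY$ is a basis, so the dependent set $XY$ has all proper subsets independent, i.e.\ $XY$ is a circuit; then Proposition~\ref{prop:XYCircuitDL} shows that $X$ is a quasi-intersection of $(X,Y)$. Otherwise $XY$ contains a unique circuit-hyperplane $C=XY\setminus\{p\}$, which is a flat since circuit-hyperplanes of a sparse paving matroid are flats, and since $X\cap Y=\emptyset$ the point $p$ lies in exactly one of $X$ and $Y$.

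If $p\in Y$, then $X\subseteq C$ while $Y\not\subseteq C$, and I claim $T=X$ is a quasi-intersection: (DL1) is immediate, and for (DL2), given a flat $X'\subsetneq X$ we have $|X'Y|\le k$, where $|X'Y|<k$ makes $X'Y$ independent and $|X'Y|=k$ forces $X'Y\supseteq Y$, hence $X'Y\ne C$ and $X'Y$ a basis; in either case $r(Y|X')=r(Y)>r(Y)-1=r(Y|X)$, so $X$ is the only flat $X'\subseteq X$ with $r(Y|X')=r(Y|X)$, which is exactly the set of flats $X'\subseteq X$ containing $T=X$. If instead $p\in X$, then $Y\subseteq C$, and I would take $T=X\cap C=X\setminus\{p\}$, a flat as an intersection of the flats $X$ and $C$. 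Then $r(T|X)=0$, and among flats $X'\subseteq X$ one has $T\subseteq X'$ precisely for $X'\in\{T,X\}$; these are exactly the flats with $r(Y|X')=r(Y|X)$, since $TY=C$ gives $r(Y|T)=(k-1)-(r(X)-1)=r(Y)-1=r(Y|X)$, while for every other flat $X'\subsetneq X$ we have $|X'|<r(X)-1$ with $X'Y$ independent, or $|X'|=r(X)-1$ with $X'\ne T$ forcing $X'Y\ne C$ and $X'Y$ a basis, so $r(Y|X')=r(Y)>r(Y|X)$. Hence $T$ satisfies (DL1) and (DL2).

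The main obstacle I anticipate is the bookkeeping in the last two cases: choosing the correct candidate ($X$ versus $X\setminus\{p\}$) according to where the missing point of $C$ lies, and verifying that the candidate really is a flat of $\Mat$. The latter rests on the fact that in a sparse paving matroid of rank $k$ every set of size at most $k-2$ is closed — adjoining a further element keeps the size below $k$, so it cannot become dependent — which is what makes $X\setminus\{p\}$ (of size $r(X)-1\le k-2$) a flat. The rank arithmetic beyond this is the same as in the $\akp$ proof.
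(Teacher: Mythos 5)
Your proposal is correct and takes essentially the same route as the paper's proof: reduce to $Y$ a hyperplane or $X$ a line via Lemma~\ref{lem:DLGuaranteed}, use the circuit case via Proposition~\ref{prop:XYCircuitDL}, and otherwise locate the unique circuit-hyperplane $C\subsetneq XY$ and take $X$ or $X\cap C$ as the quasi-intersection depending on which side of $XY$ the missing point lies. Your write-up is simply more explicit than the paper's about the $p\in Y$ subcase (where the paper leaves $X_1=X$ implicit) and about why $X\setminus\{p\}$ is a flat.
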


\begin{proof}
    The proof analogously follows that of its $\akp$ counterpart, so some details are omitted. 
    
    If $Y$ is a hyperplane then Lemma~\ref{lem:DLGuaranteed} applies. Since $r(X\cap Y)=0$, then $|XY|=|X|+|Y|=k+1$ and $r(XY)=k$. If there is no circuit-hyperplane in $XY$ then Prop.~\ref{prop:XYCircuitDL} applies. We finish by considering the case where $XY$ contains a circuit-hyperplane.
    
    For all flats $X'\subset X$ such that $X'Y$ is not a circuit-hyperplane, $r(Y|X')=r(Y)>r(Y|X)$ by the independence of $X'$ and $Y$. 
    Now, let $X_1\subseteq X$ and $Y_1\subseteq Y$ be such that $X_1Y_1$ is a circuit-hyperplane. If $X_1\subset X$, then $r(X_1)=r(X)-1$ and $r(Y|X_1)=r(X_1Y)-r(X_1)=k-1-r(X)+1=r(Y|X)$. Therefore, $X_1$ is a quasi-intersection of $(X,Y)$ in $\Mat$.
\end{proof}

In general, we do not know if there's any link between $\dlp$ and $\akp$ properties, but in the special case of rank-4 matroids, we have the following.

\begin{proposition}\label{prop:DLIffAK}
    Let $\Mat=(Q,r)$ be a rank-4 matroid. Then it admits a $\dlp$ extension if and only if it admits an $\akp$ extension that is a matroid. 
\end{proposition}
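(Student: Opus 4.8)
The plan is to unwind both extension properties in rank 4 and show that, pair by pair, the conditions (DL1)--(DL2) defining a quasi-intersection coincide with the conditions (AK1)--(AK2) defining an $\akp$-information, \emph{provided} the $\akp$ extension is a matroid. The key observation is that in a matroid $\Mat'=(QZ,r)$ one has $r(T\mid X')=r(TX')-r(X')$, so (DL1) $r(T\mid X)=0$ is exactly (AK1) $r(Z\mid X)=0$ with $T$ in the role of $Z$. So the whole content is the equivalence of (DL2) with (AK2) for a flat $T\subseteq QZ$ with $r(T\mid X)=0$. I would first record, using Lemma~\ref{lem:linesAndHyps}, that for a nonmodular pair of flats $(X,Y)$ in a rank-4 matroid the only cases are: $X$ a line (rank 2), $X$ a plane (rank 3) and $Y$ a line or plane, or $X$ a plane and $Y$ a plane with $X\cap Y$ a point. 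By Lemma~\ref{lem:AKExtForLines} and the $\dlp$ analogues in the appendix, the line cases are handled trivially on both sides, so the real work is the plane-plane case with $r(X)=r(Y)=3$, $r(XY)=4$, $r(X\cap Y)=1$, hence $r(Y\mid X)=1$ and the only proper flat $X'\subsetneq X$ to worry about are the points of $X$; for a point $X'$ one has $r(Y\mid X')=r(YX')-3$, which equals $r(Y\mid X)=1$ iff $X'\subseteq X\cap Y$ and equals $2$ otherwise.

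Next I would translate (AK2) in this situation. With $Z=\akp(X,Y)$ and the extension a matroid, $r(X:Y)=r(X)-1=2$ by Proposition~\ref{prop:CIFromAK} (the relevant computation $r(Z)=r(X)+r(Y)-r(XY)=2$), so $r(Z)=2$ and $r(X\mid Z)=r(XZ)-r(Z)$. Setting $X'=X$ in (AK2) gives $r(X\mid Z)=r(X\mid Y)=1$, so $r(XZ)=3$, i.e. $Z$ lies in the plane $X$; thus $Z$ is a line inside $X$. For a point $X'$ of $X$, (AK2) says $r(X'\mid Z)=r(X'\mid Y)$, i.e. $r(X'Z)-2$ equals $1$ if $X'\subseteq X\cap Y$ and $2$ otherwise — but $r(X'Z)\le 3$ always and $r(X'Z)=2$ iff $X'\subseteq Z$. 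So (AK2) holds for all points $X'\subseteq X$ exactly when the unique point of $X$ lying on the line $Z$ is the point $X\cap Y$; in other words $Z$ is a line of $X$ passing through $X\cap Y$ and $Z$ ``remembers'' which point of $X$ is $X\cap Y$ precisely in the sense of (DL2). Comparing with (DL2) for $T$: $r(T\mid X')=0$ (for a point $X'\subseteq X$, this forces $X'\subseteq T$) iff $r(Y\mid X')=r(Y\mid X)=1$ iff $X'\subseteq X\cap Y$. So the $\dlp$ condition on the points of $X$ is the identical statement. This gives the implication in both directions once one checks that the flat produced on one side can be taken to serve on the other; concretely, for ($\Leftarrow$) take $T:=Z=\akp(X,Y)$ (a flat in $QZ$, automatically a flat there, with $r(T\mid X)=0$), and the computation above shows (DL1)--(DL2); for ($\Rightarrow$), given a $\dlp$ extension with quasi-intersection $T$, the point-extension by $T$ (or $T$ itself, after closing) is a matroid, and the same computation shows it is an $\akp$ extension.

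The main obstacle I anticipate is bookkeeping the ``closure'' subtleties: a quasi-intersection $T$ is a flat of $QZ$ but need not have rank $2$ — a priori it could have rank $1$ (when $(X,Y)$ is ``close to modular'') or be larger — and likewise the $\akp$-information $Z$ is only constrained by $r(Z)=r(X:Y)$, so I must argue these coincide and that restricting to flat $X'\subseteq X$ (legitimate by Lemma~\ref{prop:AK2reduced} and Proposition~\ref{prop:AKForFlatsNew}) loses nothing. There is also the degenerate sub-case $r(X\cap Y)=1$ versus the possibility $X\cap Y=\emptyset$ forced by $r(X)+r(Y)=6>4+0$ only if nonmodularity is strict; Lemma~\ref{lem:linesAndHyps} and a short submodularity argument pin down $r(X\cap Y)=1$ in the plane-plane rank-4 case, so this is routine. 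Finally I would note where rank $4$ is essential: in higher rank, $r(Y\mid X)$ can exceed $1$, so (DL2) becomes a condition about which flats $X'$ drop the \emph{full} amount $r(Y\mid X)$, whereas (AK2) is a family of equalities for \emph{all} flats $X'\subseteq X$ simultaneously, and these no longer match; the coincidence is special to $r(Y\mid X)=1$, which the rank-4 hypothesis guarantees for every relevant nonmodular plane-plane pair.
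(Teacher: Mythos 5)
The proposal misidentifies which nonmodular pair is the one that actually requires work. After Lemma~\ref{lem:AKExtForLines} (and its $\dlp$ counterpart) disposes of $X$ a line, and Lemma~\ref{lem:YHypAK} (and Lemma~\ref{lem:DLGuaranteed}) disposes of $Y$ a hyperplane, the surviving case in rank 4 is $r(X)=3$ \emph{and} $r(Y)=2$: $X$ a hyperplane and $Y$ a line. Your ``plane--plane'' case $r(X)=r(Y)=3$ has $Y$ of rank 3, i.e.\ a hyperplane in a rank-4 matroid, so it is already covered by the trivial lemmas and contributes nothing; meanwhile the true nontrivial case never gets analysed. In that case the arithmetic is different from what you compute: $r(X\cap Y)=0$ (not a point), $r(Y\mid X)=1$, and $r(X:Y)=3+2-4=1$, so the $\akp$-information $Z$ has rank $1$, not $2$ as you assert. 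The pivotal structure for this case is the family $\cL$ of lines $X'\subset X$ with $X'Y$ a hyperplane: if $|\cL|\le 1$ a principal modular cut immediately furnishes both a quasi-intersection and an $\akp$ extension; if $|\cL|>1$ one shows that $T\in\cl_{\Mat'}(X')$ for every $X'\in\cL$ (in the $\dlp$ direction) is exactly the content of (AK2) for the rank-1 point $z_o$, using the observations $r(Y\mid X')=1$ for $X'\in\cL$ and $r(Y\mid L)=2$ for lines $L\subset X$ outside $\cL$. None of this case-split on $|\cL|$ appears in the proposal. As a smaller issue, even within your plane--plane discussion the claim that $r(X\cap Y)$ is ``pinned down'' to be $1$ is false; $r(X\cap Y)\in\{0,1\}$ are both possible for a nonmodular pair of hyperplanes in rank 4.
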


\begin{proof}
    Firstly, by Lemmas~\ref{lem:DLGuaranteed},~\ref{lem:AKExtForLines}, and~\ref{lem:YHypAK}, we only need to concern ourselves with nonmodular pairs of flats $(X,Y)$ of $\Mat$ where $X$ is a hyperplane and $Y$ is a line. Let $\cL$ be the family of lines $X'\subset X$ satisfying that $X'Y$ is a hyperplane. 
    
    If $|\cL|$ is 0 (resp., 1), then $X$ (resp., $X'\in\cL$) is a quasi-intersection of $(X,Y)$, and $\cF_X$ (resp., $\cF_{X'}$) is the modular cut corresponding to an $\akp$ extension of $(X,Y)$. 

    Now suppose that $|\cL|>1$. Note that
    \begin{enumerate}
        \item[(i)] $r(Y|X')=r(X'|Y)=r(Y|X)=1$ for every $X'\in \cL$, and
        \item[(ii)] $r(Y|L)=r(L|Y)=2$ for every line $L\subset X$, $\not\in \cL$.
    \end{enumerate} 
    
    Suppose that $\Mat$ admits a $\dlp$ extension $\Mat'$. By (DL2) and (i), there exists $T$ such that $T\in\cl_{\Mat'}(X')$ for every $X'\in \cL$. Hence, $r(X'|T)=1$. Also, $r(L|T)=r(L)$ for every line $L\subset X$, $L\not\in \cL$, proving (AK2). Hence, $\Mat'$ is an $\akp$ extension for $(X,Y)$.

    Now suppose that $\Mat'$ is an $\akp$ extension, and let $z_o=\akp(X,Y)$. For every flat $X'\subset X$, it is clear that when $r(X'|z_o)<r(X')$ then $r(z_o|X')=0$ and $X'\in\cL$. On the other hand, if $r(X'|z_o)=r(X')$ then $r(z_o|X')=1$ and $X'$ is a line not in $\cL$ or $r(X')<2$. Hence, $z_o$ is a quasi-intersection of $(X,Y)$ and $\Mat'$ is also a $\dlp$ extension for $(X,Y)$.
\end{proof}

\section{Some New Non-Algebraic Matroids}
\label{sec: exps}
Using recursive applications of the $\dlp$ and $\akp$ properties, we were able to discover some new non-almost entropic and/or non-algebraic matroids on 9 and 10 points. 

Every almost entropic polymatroid $\Poly$ admits at least one $\akp$ extension for each pair of subsets $(X,Y)$ of its ground set \cite{BFP23,FKMP20}. Taking $k$ pairs of subsets, we can find recursively an entropic polymatroid that is a recursive $AK$ extension of $\Poly$.

Hence, to show that a matroid is not almost entropic, it is enough to find a sequence of pairs of sets for which there is no polymatroid extension of the original matroid that contains $\akp$-informations of these pairs of sets.


\begin{example}
    Let $A_1=\{0,1,2\}$, $A_2=\{3,4,5\}$, $A_3=\{6,7,8\}$, $B_1=\{0,3,6\}$, $B_2=\{1,4,7\}$, and $B_3=\{2,5,8\}$. The Tic-Tac-Toe matroid $(T_3)$ is the sparse paving $(5,9)$ matroid with ground set $Q=A_1A_2A_3$ and circuit-hyperplanes $\mathcal{C}(T_3)=\{A_iB_j:i,j\in\{1,2,3\}\text{ and }(i,j)\not=(2,2)\}$ \cite{AlHo95}. It is a non-linearly representable matroid that satisfies the Ingleton inequality\cite{AlHo95,BBFP21}. Its dual $(T_3^*)$ is not 3-$\akp$ (proved in Proposition 4.14 of~\cite{Bam21}) and is therefore not almost entropic nor algebraic. Let $\Mat=(Qe,r)$ be the identically self-dual (ISD), sparse paving $(5,10)$ matroid with circuit-hyperplanes $\mathcal{C}(T_3)\cup\{Ce:C\in\mathcal{C}(T_3^*)\}$. Note that $\Mat\setminus e=T_3$ and $\Mat/e=T_3^*$. Since $T_3$ is not $\cip$ and so not folded linear~\cite{BBFP21}, and $T_3$ is a minor of $\Mat$, then $\Mat$ is not folded linear. Applying $\akp$ at depth 3, we found that it is not almost entropic, and therefore, not algebraic using the following combinations:
   \begin{align*}
       \alpha &= \akp(4578e,36), &\beta &= \akp(1245e,03),\\
       \gamma &= \akp(258e\alpha\beta,17).
   \end{align*}
   The matroid $\Mat$ is a smallest ISD, Ingleton-compliant (i.e., satisfies the Ingleton inequality~\cite{Ing71}) matroid that is not almost entropic. Though it is 2-$\dlp$, we do not yet know if it is 3-$\dlp$.
    
    The question of whether the Tic-Tac-Toe matroid is algebraic remains open.
\end{example}

\begin{example}\label{exp: fam2}
    Consider the $(5,9)$ matroids with the following Bollen identifiers~\cite{Bollen18}: $100735$, $100736$, $100755$, $103147$, and $147269$. These matroids are Ingleton-compliant sparse paving matroids that are not 2-$\cip$. They are also Frobenius flock representable and satisfy Ingleton-Main at all depths. While they satisfy Dress-Lov\'asz up to depth 3, we do not yet know if they are 4-$\dlp$. In any case, they are neither almost entropic nor algebraic as we found that they fail $\akp$ at depth 4 using the following:
   \begin{align*}
       \alpha &= \akp(12678,03), &\beta &= \akp(03678\alpha,15),\\
       \gamma &= \akp(1257\alpha\beta,48), &\tau &= \akp(0357\alpha\gamma, 26). 
   \end{align*}
   These matroids are among the smallest Ingleton-compliant sparse paving matroids that are not almost entropic. Since they are from the family mentioned in~\cite[Section 4.5.2]{Bam21}, it is likely that many of those matroids will also not be almost entropic
\end{example}

\begin{example}
In~\cite{Bollen18}, Bollen found all matroids that are not algebraic due to failing the Ingleton-Main lemma at depths up to 5 for $(4,9)$ and $(5,9)$ matroids. Due to time constraints, he wasn't able to find all $(4,9)$ matroids that fail the same property at depth 6, leaving some unchecked. Going at these unchecked matroids, we were able to find all that are not 6-$\dlp$ and as well as those that are not 7-$\dlp$. And since $\dlp$ and the Ingleton-Main lemma are equivalent for rank-4 matroids, then these are also the remaining $(4,9)$ matroids that fail Ingleton-Main at depths 6 and 7.\fullv{ These matroids are listed in Appendix~\ref{sec: nonAlgbMats}.}\confv{ These matroids are listed in~\cite{BaFa24}.} Time constraints meant we were not able to do an exhaustive check for non-8-$\dlp$ matroids. However, by randomly selecting and testing, we were able to find a few non-8-$\dlp$ $(4,9)$ matroids. These are matroids $5635$, $7262$, and $103732$. A summary of these results is shown in Table~\ref{table:DLandIMmatroids}, which is an update on~\cite[Table 6]{Bollen18}. That is, we updated the values for $\dlp$ of depth 6,7, and 8, as discussed above. For depth 8, the search is not exhaustive, and so this number is a lower bound. 

\begin{longtblr}[
  caption = {Dress-Lov\'asz (DL) and Ingleton-Main (IM) check.},
  entry = {Short Caption},
  label = {table:DLandIMmatroids},
]{
  colspec = {l|ccc}, 
  row{1} = {font=\bfseries}, 
}
    \hline 
	$(r,n)$    & $(4,8)$ & $(4,9)$ & $(5,9)$ \\ \hline
	$\dlp$         & 39      & 27,137  & 27,137  \\
	$\dlp$ depth 2 & 39      & 27,137  & 27,137  \\
	IM depth 3 & 39      & 28,418  & 27,144  \\
	IM depth 4 & 39      & 30,171  & 27,442  \\
	IM depth 5 & 39      & 30,658  & 27,500  \\ 
	$\dlp$ depth 6 & 39      & 31,104  & ?  \\ 
	$\dlp$ depth 7 & 39      & 31,370  & ?  \\ 
	$\dlp$ depth 8 & 39      & $31,373^*$  & ?  \\ \hline
\end{longtblr}

In addition to being non-algebraic, we found matroid $129075$, a non-6-$\dlp$ matroid, to be non-almost entropic due to failing $\akp$ at depth 6 with the following combinations:
\begin{align*}
   \alpha &= \akp(2678, 35), &\beta &= \akp(2356\alpha, 14),\\
   \gamma &= \akp(01234\beta, 57), &\tau &= \akp(01234\beta\gamma, 67), \\ 
   \mu &= \akp(3468, \alpha\gamma), &\nu &= \akp(2678\alpha\tau, \beta\mu). 
\end{align*}
As for the other $(4,9)$ matroids we found to be non-algebraic, we do not yet know if they are almost entropic.

\end{example}

\section{Secret Sharing Schemes}
\label{sec: sss results}

We conclude this work presenting new results on secret sharing schemes obtained with the improvements of the $\akp$ technique presented in this paper. For an introduction to secret sharing, and more detailed definitions, see~\cite{BBFP21,Bei11,Pad12}.

In order to get information-theoretic lower bounds on the efficiency of these schemes, we consider the following definition. A \emph{secret sharing scheme} on a set $P=\{1,2,\ldots,n\}$ is a collection of discrete random variables $\Sigma=(S_0,S_1,\ldots,S_n)$ such that $H(S_0)>0$ and $H(S_0|S_P)=0$, where $H$ is the Shannon entropy and $S_0$ is the random variable associated to the \emph{dealer}, $p_0$.

We say that a subset $X\subseteq P$ is \emph{authorized} if $H(S_0|S_X)=0$, and we say that a subset is \emph{forbidden} if $H(S_0|S_X)=H(S_0)$. In this work, we only consider \emph{perfect} schemes, that is, schemes where every subset is either authorized or forbidden. The family of authorized subsets is called the \emph{access structure} of the scheme, and it is denoted by $\Gamma$. The \emph{information ratio} of the scheme is a measure of the scheme's efficiency given as $\max_i\{H(S_i)\}/H(S_0)$. If the information ratio is $1$, we say that the scheme is \emph{ideal}.

The access structure of ideal secret sharing schemes are ports of matroids~\cite{BrDa91}. However, the converse is not true: only ports of entropic matroids admit ideal schemes~\cite{BrDa91,Matus99,Sey92}. 

For a given access structure $\Acc$, the infimum of the information ratio of all schemes realizing $\Gamma$ is denoted as $\sigma(\Acc)$. Bounds on this value can be obtained using information inequalities. Csirmaz~\cite{Csi97} found a family of access structures $\{\Gamma_n\}_n$ satisfying that $\sigma(\Gamma_n)=\Omega(n/\log n)$. This is the best known lower bound for the information ratio. For matroid ports, finding non-trivial lower bounds requires using non-Shannon information inequalities, but until now all lower bounds that have been found are constant and smaller than 2~\cite{BBFP21,BLP08,FKMP20,Gurpinar22}.

Matroids that do not satisfy the $\akp$ property are not entropic. Therefore, they do not admit ideal schemes and the ports of such matroids will require schemes with information ratio greater than 1. 

In this work, we improved the linear programming problems introduced in~\cite{FKMP20} with the optimizations presented in Theorem~\ref{thm:AKRedefined} and in Definition~\ref{def:NewAK}. The resulting linear programming problem is presented below. We define $Q=Pp_0$. The conditions for  $(QZ, f)$ being compatible with $\Gamma$ are explained in \cite{BBFP21,FKMP20}.

\begin{lpp}
	\label{tab:LPAKNew}
	Let $X,Y\subseteq P$. The optimal value of this linear programming problem 
	is a lower bound on $\sigma(\Acc)$.
	\begin{align*}
	\text{Minimize }\quad& v \\
	\text{subject to}\quad& v \ge f(x)  \text{ for every } x \in P\\
    & (QZ, f) \text{ is a polymatroid compatible with }\Gamma\\
	& \mathrm{(AK1), (AK2')}\text{ on }Z\text{ and } (X,Y)
	\end{align*}
\end{lpp}

By solving LP~\ref{tab:LPAKNew} for the ports of the non-Ingleton-compliant matroids on 8 points, we were able to improve the bounds on $\sigma({\Acc})$ for a number of them. 
In the case of some of the ports of the $Q_8$ matroid, the new $\akp$ definition
(Definition~\ref{def:NewAK}) gave better bounds than the old one (see Section~\ref{sec: AK lemma}). Perhaps, this is an indication of the strength of the new definition over the previous one. Using this new definition, we now have that the current best bound on $\sigma({\Acc})$ for a matroid port is 52/45, which was obtained for some of the ports of the $AG(3,2)'$ matroid. Apart from the already mentioned ports of the $Q_8$ matroid which lower bound was gotten using 2-$\akp$, the other results here were gotten using 4-$\akp$. The bounds for the ports of the named matroids presented in Table~\ref{table:8ptstable} also match those gotten for the same matroid ports using the copy lemma~\cite{Gurpinar22}, evidencing that even without knowing if there are any symmetries inherent in the matroid, the $\akp$ lemma can still be used to get bounds that are on par with those gotten when symmetry conditions are taken into account in using the copy lemma. 

\begin{longtblr}[
  caption = {Improved lower bounds on $\sigma({\Acc})$ for some 8-point matroids. Best previous bounds were from~\cite{BBFP21} unless specified.},
  entry = {Short Caption},
  label = {table:8ptstable},
]{
  colspec = {ccXX}, hlines,
  row{1} = {font=\bfseries},
}
Matroid          & Port       & Previous Bound     & Improved Bound \\
	1490           & $0, 2, 3, 4, 5, 6$         & $8/7$ & $53/46$          \\
	1491           & $0, 3, 7$                  & $33/29$ & $8/7$          \\
	1491           & $2, 4, 6$               & $8/7$ & $84/73$             \\
	1492           & $0, 1, 2, 3, 6, 7$   & $49/43$ & $38/33$           \\
	1499           & $0, 2, 3, 4, 5, 6$         & $8/7$ & $38/33$         \\
	1500           & $0, 2, 4, 5$         & $8/7$ & $38/33$              \\
	1501           & $0, 1, 2, 3, 6, 7$        & $33/29$ & $8/7$  \\
	1502           & $2, 3, 4, 7$               & $33/29$ & $8/7$   \\
	1525           & $0, 2, 4, 5$               & $33/29$ & $8/7$   \\
	1526           & $0, 2, 3, 4, 5, 6$         & $8/7$ & $38/33$   \\
	1532           & $0, 1, 2, 3, 5, 6 $        & $33/29$ & $8/7$   \\
	1579           & $0, 2, 4, 5$               & $33/29$ & $8/7$  \\
	$AG(3,2)'$ & $1, 3, 5, 7$               & $49/43$ & $52/45$            \\
	$F_8$        & $3, 4, 5, 6$               & $23/20$ \cite{Gurpinar22} & $38/33$         \\
	$Q_8$        & $1, 4, 6, 7$               & $49/43$ & $8/7$           \\
\end{longtblr}

In addition to these 8-point matroids, we also present new bounds on $\sigma({\Acc})$ for ports of the 9-point matroids described in Example~\ref{exp: fam2}. For each of these matroids, there is at least a port for which $\sigma({\Acc})\geq89/88$. This was the best bound we got for ports of these matroids. However, since we only tried one combination of sets to get the bounds, we do not rule out the possibility that other combinations might produce better bounds. These are shown in Table~\ref{tab: 9ptstable}. 

\begin{table}[ht]
	\centering
	\caption{Bounds on $\sigma({\Acc})$ for some non-AK (5,9) matroids}
    \label{tab: 9ptstable}
\begin{tabular}{c|c|c}
    \hline
    \textbf{Matroid} & \textbf{Sets} & \textbf{Best Bound} \\ \hline
    100735 & \multirow{5}{*}{$\begin{array}{c}
            \{1, 2, 6, 7, 8\}, \{0, 3\} \\
            \{0, 3, 6, 7, 8, 9\}, \{1, 5\} \\
            \{1, 2, 5, 7, 9, 10\}, \{8, 4\} \\
            \{3, 5, 7, 0, 9, 11\}, \{2, 6\} \\
        \end{array}$} & \multirow{5}{*}{$1.011\bar{36}=89/88$} \\ \cline{1-1}
    100755 & & \\ \cline{1-1}
    100736 & & \\ \cline{1-1}
    103147 & & \\ \cline{1-1}
    147269 & & \\ \hline
\end{tabular}
\end{table}

\appendix

\fullv{
\section{Non-Algebraic Matroids}
\label{sec: nonAlgbMats}
The non-6-$\dlp$ matroids we found, excluding those found by Bollen, 
are listed in Table~\ref{table:nonDL6mats}, while those that are not 7-$\dlp$ are listed in Table~\ref{table:nonDL7mats}. 

\begin{longtblr}[
  caption = {Non-6-$\dlp$ $(4,9)$ Matroids.},
  entry = {Short Caption},
  label = {table:nonDL6mats},
]{
  colspec = {ccccccc}, hlines, vlines,
}
    6429 & 6546 & 6780 & 6823 & 6840 & 6909 & 7035 \\ 
    7066 & 7067 & 7110 & 7144 & 7146 & 7183 & 7186 \\ 
    7215 & 7240 & 7268 & 7285 & 7318 & 7327 & 7328 \\ 
    7366 & 7454 & 7498 & 7753 & 7756 & 7765 & 7807 \\ 
    7825 & 7828 & 7843 & 7953 & 7954 & 7955 & 8048 \\ 
    8049 & 9057 & 9072 & 9116 & 9224 & 9247 & 12949 \\
    18130 & 19749 & 19877 & 20483 & 20586 & 20800 & 21070 \\ 
    21071 & 30065 & 30998 & 34496 & 34498 & 39130 & 43892 \\ 
    43899 & 43920 & 45055 & 45143 & 46213 & 47625 & 47640 \\ 
    47643 & 49171 & 49174 & 55311 & 55344 & 55349 & 55352 \\ 
    55366 & 55418 & 55419 & 55494 & 55496 & 55564 & 55578 \\ 
    55595 & 55607 & 55613 & 55652 & 55854 & 57163 & 57585 \\ 
    58538 & 58548 & 59364 & 59379 & 59452 & 68407 & 70954 \\ 
    72045 & 72395 & 72563 & 72731 & 72733 & 72767 & 72843 \\ 
    72864 & 72879 & 73182 & 73238 & 73262 & 73893 & 74412 \\ 
    75017 & 75399 & 75405 & 75407 & 75408 & 78830 & 80380 \\ 
    81017 & 81270 & 81335 & 84491 & 88947 & 88980 & 88998 \\ 
    89007 & 89020 & 89029 & 89037 & 91469 & 91579 & 91846 \\ 
    92043 & 92676 & 93533 & 94039 & 95365 & 95404 & 95412 \\ 
    95436 & 95437 & 95438 & 95487 & 95495 & 95532 & 95654 \\ 
    95796 & 95908 & 95909 & 95941 & 96033 & 96330 & 96346 \\ 
    96364 & 96438 & 96446 & 96486 & 96592 & 97412 & 98136 \\ 
    98183 & 99690 & 100129 & 100141 & 100159 & 100977 & 101050 \\ 
    101418 & 102179 & 102537 & 104402 & 104881 & 104970 & 104983 \\ 
    105004 & 112932 & 113013 & 113458 & 113459 & 113817 & 114236 \\ 
    114719 & 115835 & 116132 & 116136 & 116137 & 117651 & 122780 \\ 
    129075 & 129076 & 129492 & 129495 & 139248 & 139307 & 139326 \\ 
    139328 & 139330 & 139369 & 139370 & 139389 & 139404 & 139409 \\ 
    139411 & 139418 & 139419 & 139445 & 139548 & 141113 & 141996 \\ 
    143899 & 144026 & 144285 & 144322 & 144353 & 144405 & 144410 \\ 
    144411 & 144414 & 144428 & 144471 & 144548 & 144914 & 145366 \\ 
    147282 & 147295 & 147388 & 147391 & 147756 & 147760 & 147804 \\ 
    147954 & 153825 & 153833 & 153849 & 154205 & 154602 & 154605 \\ 
    154606 & 154607 & 155113 & 155288 & 155929 & 155930 & 156105 \\ 
    156231 & 156280 & 156501 & 156569 & 156610 & 156638 & 157002 \\ 
    157049 & 157230 & 157949 & 158740 & 158814 & 158819 & 158874 \\ 
    159035 & 159071 & 159092 & 159155 & 159157 & 159170 & 159192 \\ 
    159195 & 159200 & 159203 & 159205 & 159215 & 159223 & 159225 \\ 
    159228 & 159230 & 159234 & 159239 & 159351 & 159412 & 159413 \\ 
    159834 & 159942 & 160551 & 160694 & 160727 & 160891 & 161038 \\ 
    161488 & 161556 & 161589 & 161593 & 161723 & 161729 & 161820 \\ 
    161838 & 161896 & 162062 & 162079 & 162507 & 163170 & 163695 \\ 
    169252 & 170268 & 171948 & 171958 & 172010 & 172035 & 173104 \\ 
    173910 & 173930 & 173943 & 174171 & 174818 & 174936 & 174942 \\ 
    174948 & 174965 & 174969 & 174976 & 174978 & 174985 & 175004 \\ 
    175015 & 175018 & 175027 & 175029 & 175034 & 175314 & 175432 \\ 
    175459 & 175979 & 176375 & 183588 & 183814 & 183828 & 183855 \\ 
    184062 & 184063 & 184338 & 186678 & 187231 & 187258 & 187269 \\ 
    187326 & 187443 & 187476 & 187478 & 187482 & 187613 & 187659 \\ 
    187690 & 187727 & 187816 & 188004 & 188351 \\ 
\end{longtblr}

\begin{longtblr}[
  caption = {All Non-7-$\dlp$ $(4,9)$ Matroids.},
  entry = {Short Caption},
  label = {table:nonDL7mats},
]{
  colspec = {ccccccc}, hlines, vlines,
}
    1244 & 6112 & 6122 & 6172 & 6173 & 6175 & 6185 \\ 
    6186 & 6188 & 6191 & 6196 & 6339 & 6345 & 6371 \\ 
    6379 & 6391 & 6412 & 6417 & 6462 & 6563 & 6784 \\ 
    6799 & 6828 & 6915 & 6935 & 7013 & 7039 & 7196 \\ 
    7245 & 7248 & 7253 & 7262 & 7314 & 7338 & 7354 \\ 
    7388 & 7417 & 7852 & 7922 & 7935 & 7959 & 8022 \\ 
    9069 & 9071 & 9108 & 9238 & 9239 & 9242 & 9249 \\ 
    9253 & 12250 & 15744 & 18610 & 21244 & 34469 & 34480 \\ 
    34483 & 34487 & 34488 & 34604 & 34605 & 34609 & 38800 \\ 
    38988 & 43869 & 47603 & 54910 & 55299 & 55316 & 55346 \\ 
    55426 & 55492 & 55566 & 55579 & 56698 & 56828 & 56894 \\ 
    59221 & 59456 & 66529 & 66560 & 70937 & 72573 & 73430 \\ 
    74350 & 75453 & 81302 & 81319 & 82110 & 87912 & 88770 \\ 
    91754 & 92566 & 94389 & 95363 & 95418 & 95431 & 95432 \\ 
    95579 & 95580 & 95600 & 95659 & 95872 & 95982 & 95990 \\ 
    95999 & 96022 & 96054 & 96059 & 96060 & 96064 & 96074 \\ 
    96091 & 96094 & 96132 & 96445 & 98080 & 98186 & 99199 \\ 
    100135 & 100271 & 100272 & 100747 & 100773 & 100789 & 100802 \\ 
    100811 & 100833 & 100917 & 100919 & 100921 & 100935 & 100936 \\ 
    100949 & 100961 & 100963 & 100971 & 101068 & 101127 & 101236 \\ 
    101258 & 101290 & 101352 & 101357 & 101369 & 101441 & 101655 \\ 
    101659 & 101735 & 101793 & 101875 & 101924 & 101933 & 101937 \\ 
    101961 & 102013 & 102041 & 102132 & 102656 & 102660 & 103213 \\ 
    103216 & 103226 & 104435 & 104965 & 104997 & 104999 & 105003 \\ 
    116285 & 116301 & 117873 & 125163 & 125591 & 126686 & 128191 \\ 
    129790 & 134729 & 135411 & 135417 & 135421 & 135424 & 136472 \\ 
    139406 & 139412 & 139578 & 139603 & 141988 & 144010 & 144156 \\ 
    144301 & 144426 & 147291 & 147294 & 147296 & 147322 & 154063 \\ 
    154194 & 154200 & 155301 & 156009 & 156060 & 156074 & 156116 \\ 
    156117 & 156536 & 156558 & 158736 & 158763 & 158826 & 158827 \\ 
    158875 & 158889 & 158890 & 158911 & 158913 & 158923 & 158947 \\ 
    159025 & 159109 & 159411 & 159421 & 159499 & 159515 & 159550 \\ 
    159607 & 159828 & 159844 & 159939 & 159987 & 160323 & 160353 \\ 
    160354 & 160368 & 160444 & 160558 & 160800 & 160990 & 161089 \\ 
    161467 & 161664 & 161735 & 171823 & 171986 & 173245 & 175041 \\ 
    175615 & 176416 & 178706 & 182843 & 182920 & 182945 & 183016 \\ 
    183092 & 183146 & 183327 & 183374 & 183407 & 183640 & 183646 \\ 
    183823 & 183881 & 183889 & 184046 & 187591 & 187952 & 188422 \\ 
\end{longtblr}

\section{Pseudomodularity}
\label{sec: pseudomod}

In this section, we look at one more tool that can be used to detect non-algebraic matroids. Similar to $\dlp$, it is also an extension property of algebraic matroids. While it plays the same role as $\dlp$ when dealing with rank-4 matroids, the 2 properties are not necessarily the same. 

\begin{definition}\cite[Theorem 1.4]{BjLo86}\label{def:pseu}
A matroid $\Mat=(Q,r)$ with a lattice $\mathcal{L}$ is said to be \emph{pseudomodular} if, for every $X,Y,Z\in\mathcal{L}$ such that $X$ covers $X\cap Z$ and $Y$ covers $Y\cap Z$, then 
\[r(X\cap Y)-r(X\cap Y\cap Z)\leq 1.\]
\end{definition}

\begin{theorem}[\cite{DrLo87}]\label{th:amp}
Full algebraic matroids are pseudomodular.
\end{theorem}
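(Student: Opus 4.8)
The plan is to deduce this from the quasi-intersection theorem (Theorem~\ref{thm:DrLo87}), since that is the structural consequence of algebraicity we have at hand. Fix flats $X,Y,Z$ of a full algebraic matroid $\Mat=(Q,r)$ with $X$ covering $X\cap Z$ and $Y$ covering $Y\cap Z$; we must bound $r(X\cap Y)-r(X\cap Y\cap Z)$. I would first dispose of the degenerate cases by direct rank computations: if $X\cap Y\subseteq Z$ the difference is $0$; if $X\subseteq Y$, $Y\subseteq X$, $Z\subseteq X$, $Z\subseteq Y$, or $r(X)\le 1$ or $r(Y)\le 1$, the covering hypotheses pin the difference to $0$ or $1$. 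It is then convenient to contract the flat $W_0=X\cap Y\cap Z$ (a standard reduction, using that algebraicity and the ``full'' property pass to contractions and that $X,Y,Z$ all contain $W_0$): after contracting we may assume $X\cap Y\cap Z=\emptyset$, and we only need $r(X\cap Y)\le 1$.

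In the remaining case one checks $r(XZ)=r(YZ)=r(XYZ)=r(Z)+1$: the first two follow from the covering hypotheses together with submodularity, and the third from a further submodularity step using $r((X\cap Y)Z)=r(Z)+1$ (which holds since $X\cap Y\neq\emptyset$). Thus $\overline{XZ}=\overline{YZ}=\overline{XYZ}$ is a single flat covering $Z$, and in particular each of $(X,Z)$ and $(Y,Z)$ is automatically a modular pair. Consequently Theorem~\ref{thm:DrLo87} applied to $(X,Z)$ or $(Y,Z)$ merely recovers that modularity, and applied to $(X\cap Y,Z)$ it is essentially circular (by the ``moreover'' clause it would amount to asserting what we want). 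The point is therefore to apply Theorem~\ref{thm:DrLo87} to a pair that genuinely couples all three flats — the natural candidates being $(X\cap Z,\,Y)$ and its mirror $(Y\cap Z,\,X)$. The quasi-intersections these produce lie inside $X\cap Z$ and $Y\cap Z$ respectively; chasing conditions (DL1)--(DL2) of Definition~\ref{def:QI} against $X\cap Y\cap Z=\emptyset$ and the rank identities above should locate these quasi-intersections precisely enough to force $r(X\cap Y)\le 1$, equivalently (again via the ``moreover'' clause) to force $(X\cap Y,Z)$ to be a modular pair.

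The hard part is exactly this last step: the bookkeeping of which sub-flats sit below each quasi-intersection is delicate, and one must be sure the chosen pair really injects algebraic content rather than a tautology, since pure submodularity is known not to suffice. An alternative, arguably cleaner route is to argue directly with algebraically closed subfields: writing $A,B,C\subseteq\kos$ for the algebraic closures corresponding to $X,Y,Z$, assume for contradiction that $A\cap B$ contains $s,t$ algebraically independent over $A\cap B\cap C$; the covering hypotheses then make $s,t$ algebraically dependent over $A\cap C$ and over $B\cap C$, and one invokes the lemma $\overline{(A\cap C)(s)}\cap\overline{(B\cap C)(s)}=\overline{\bigl((A\cap C)\cap(B\cap C)\bigr)(s)}$ inside $\kos$ to collapse the two dependencies into a single one over $A\cap B\cap C$, contradicting independence. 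Proving that intersection lemma is the real content either way, and it is precisely where algebraicity (as opposed to mere submodularity) enters. For a paper in this format one may also simply cite the relevant result of \cite{DrLo87} and record this connection to Theorem~\ref{thm:DrLo87}.
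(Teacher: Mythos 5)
The paper itself offers no proof of this theorem: it is stated as an attribution to Dress--Lov\'asz \cite{DrLo87}, used as a black box to derive Proposition~\ref{pr:pseu}. So there is no ``paper's own proof'' to compare against; what follows evaluates your proposal on its own terms.

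Your first instinct --- that Theorem~\ref{th:amp} should follow from Theorem~\ref{thm:DrLo87} --- is correct, but for a reason you stop short of naming, and that reason would let you skip almost all of the bookkeeping you set up. The condition ``for every pair of flats $X,Y$, the set $\{X' \subseteq X : r(Y|X') = r(Y|X)\}$ has a unique minimal element'' is precisely the Bj\"orner--Lov\'asz \emph{definition} of a pseudomodular lattice; the cover condition in Definition~\ref{def:pseu} is the equivalent reformulation established by \cite[Theorem 1.4]{BjLo86}, which is exactly what the paper is citing in that definition. Theorem~\ref{thm:DrLo87} says that for a full algebraic matroid the flat $T$ with this minimality property always exists. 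Hence Theorem~\ref{th:amp} is a direct restatement of Theorem~\ref{thm:DrLo87} modulo the already-cited Bj\"orner--Lov\'asz equivalence; you do not need to contract $X\cap Y\cap Z$, compute $r(XZ)=r(YZ)=r(XYZ)=r(Z)+1$, or hunt for the ``right'' pair to feed into the quasi-intersection theorem. What you are actually trying to re-derive in the second half of your first approach is one direction of \cite[Theorem 1.4]{BjLo86} itself, and you rightly flag that this bookkeeping is delicate and left unfinished; the cleaner route is simply to cite that equivalence.

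Your alternative field-theoretic route has a concrete gap: the claimed intersection identity
\[
\overline{(A\cap C)(s)}\cap\overline{(B\cap C)(s)}=\overline{\bigl((A\cap C)\cap(B\cap C)\bigr)(s)}
\]
is false for algebraically closed subfields of a big field $\kos$. Take $\kos$ an algebraic closure of $\mathbb{C}(x,y)$, $A\cap C=\overline{\mathbb{C}(x)}$, $B\cap C=\overline{\mathbb{C}(y)}$, so $(A\cap C)\cap(B\cap C)=\overline{\mathbb{C}}$, and $s=x+y$. Then the left-hand side is $\overline{\mathbb{C}(x,x+y)}\cap\overline{\mathbb{C}(y,x+y)}=\kos$, of transcendence degree $2$ over $\mathbb{C}$, while the right-hand side $\overline{\mathbb{C}(x+y)}$ has transcendence degree $1$. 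The failure of exactly this kind of naive ``intersection of closures'' identity is \emph{why} pseudomodularity of full algebraic matroids is a genuine theorem rather than a formal consequence of closure axioms; Dress--Lov\'asz get around it with their amalgamation machinery and generic specialization arguments, not a distributivity law for algebraic closures. So this branch of your proposal would not survive being written out.

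In short: keep the link to Theorem~\ref{thm:DrLo87}, replace the ad hoc rank chase by an explicit appeal to \cite[Theorem 1.4]{BjLo86} (the equivalence of the quasi-intersection formulation with Definition~\ref{def:pseu}), and drop the field-theoretic alternative, whose central lemma is false.
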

As a consequence of this, we have the following necessary condition for a matroid to be algebraic.

\begin{proposition}\label{pr:pseu}
    Let $\Mat=(Q,r)$ be an algebraic matroid, let $X,Y,Z$ be flats of $\Mat$, and let $A=X\cap Y\cap Z$. If these satisfy the following conditions
    \begin{enumerate}
    \item $X$ covers $X\cap Z$, $Y$ covers $Y\cap Z$, and
    \item $r(X\cap Y)-r(A)> 1$,    
    \end{enumerate} 
    then there exists an algebraic matroid $\Mat'=(Qe,r)$ that extends $\Mat$ by a point $e$ that is in the closure of $X$, $Y$ and $Z$ and $r(Ae)>r(A)$.
\end{proposition}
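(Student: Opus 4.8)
The plan is to pass to the full algebraic matroid of $\Mat$, where pseudomodularity is available (Theorem~\ref{th:amp}), carry out the rank arithmetic there to produce the new point, and then descend to a one-point extension of $\Mat$.

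First I would use that every algebraic matroid is a restriction of its full algebraic matroid (Section~\ref{sec:DLExts}): fixing an algebraic representation $(a_i)_{i\in Q}$ of $\Mat$ over $\field$ inside an extension $\kos$, let $\widehat{\Mat}$ be the associated full algebraic matroid (one may take the full algebraic matroid of the finitely generated field $\field((a_i)_{i\in Q})$ if finiteness is preferred), so that $\Mat=\widehat{\Mat}\,|\,Q$. Then $r_{\widehat{\Mat}}(S)=r(S)$ for all $S\subseteq Q$, the flats of $\Mat$ are exactly the intersections of flats of $\widehat{\Mat}$ with $Q$, and $\cl_{\widehat{\Mat}}(T)\cap Q=\cl_{\Mat}(T)$ for every $T\subseteq Q$. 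Set $\widehat X=\cl_{\widehat{\Mat}}(X)$, $\widehat Y=\cl_{\widehat{\Mat}}(Y)$, $\widehat Z=\cl_{\widehat{\Mat}}(Z)$, and $\widehat A=\widehat X\cap\widehat Y\cap\widehat Z$. Since $X,Y,Z$ are flats of $\Mat$ we get $\widehat X\cap Q=X$, $\widehat Y\cap Q=Y$, $\widehat Z\cap Q=Z$, hence $\widehat A\cap Q=A$, and $r(\widehat X)=r(X)$, $r(\widehat Y)=r(Y)$, $r(\widehat Z)=r(Z)$.

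Next I would transfer the covering hypotheses to $\widehat{\Mat}$. From ``$X$ covers $X\cap Z$'' we have $X\cap Z\subsetneq X$ (so $X\not\subseteq Z$) and $r(X\cap Z)=r(X)-1$. Since $X\cap Z\subseteq\widehat X\cap\widehat Z\subseteq\widehat X$, this gives $r(\widehat X\cap\widehat Z)\ge r(X\cap Z)=r(\widehat X)-1$; and $X\not\subseteq Z$ forces $\widehat X\not\subseteq\widehat Z$ (otherwise $X\subseteq\widehat Z\cap Q=Z$), so $\widehat X\cap\widehat Z\subsetneq\widehat X$ and therefore $r(\widehat X\cap\widehat Z)=r(\widehat X)-1$. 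Since a strict containment of flats with rank difference one is a covering pair in a geometric lattice, $\widehat X$ covers $\widehat X\cap\widehat Z$, and symmetrically $\widehat Y$ covers $\widehat Y\cap\widehat Z$. Pseudomodularity of the full algebraic matroid (Theorem~\ref{th:amp}) applied to $\widehat X,\widehat Y,\widehat Z$ then gives $r(\widehat X\cap\widehat Y)-r(\widehat A)\le 1$. Combining this with $r(\widehat X\cap\widehat Y)\ge r(X\cap Y)$ (as $X\cap Y\subseteq\widehat X\cap\widehat Y$) and the integrality consequence $r(X\cap Y)\ge r(A)+2$ of hypothesis~(2), I obtain $r(\widehat A)\ge r(X\cap Y)-1\ge r(A)+1>r(A)$.

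Finally, because $r(\widehat A)>r(A)=r_{\widehat{\Mat}}(\cl_{\widehat{\Mat}}(A))$, the flat $\widehat A$ is not contained in $\cl_{\widehat{\Mat}}(A)$, so I may pick $e\in\widehat A\sss\cl_{\widehat{\Mat}}(A)$; then $r_{\widehat{\Mat}}(Ae)>r(A)$, and $e\notin Q$ since otherwise $e\in\widehat A\cap Q=A\subseteq\cl_{\widehat{\Mat}}(A)$. Taking $\Mat'=\widehat{\Mat}\,|\,(Q\cup\{e\})$, this matroid is algebraic (a restriction of an algebraic matroid), it extends $\Mat=\widehat{\Mat}\,|\,Q$, it satisfies $r(Ae)>r(A)$, and, since $e\in\widehat A\subseteq\widehat X\cap\widehat Y\cap\widehat Z$ and closures in a restriction are traces of closures in the ambient matroid, $e$ lies in $\cl_{\Mat'}(X)\cap\cl_{\Mat'}(Y)\cap\cl_{\Mat'}(Z)$, which is exactly the required extension. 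I expect the only delicate point to be this transfer step: the covering relations of $\Mat$ need not survive in $\widehat{\Mat}$ at the level of the lattice of flats, but they do survive at the level of ranks, which is all pseudomodularity uses; the remaining arguments are routine rank arithmetic together with standard facts about matroid restrictions and closures.
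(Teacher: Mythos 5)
Your proof is correct and follows essentially the same route as the paper: pass to a full algebraic matroid $\widehat{\Mat}$ containing $\Mat$, apply pseudomodularity (Theorem~\ref{th:amp}) to the closures $\widehat X,\widehat Y,\widehat Z$, deduce $r(\widehat X\cap\widehat Y\cap\widehat Z)>r(A)$, and restrict back by adjoining one new point. The one place you go beyond the paper is in explicitly verifying that the covering hypotheses transfer to $\widehat{\Mat}$ (i.e.\ that $\widehat X$ covers $\widehat X\cap\widehat Z$ and $\widehat Y$ covers $\widehat Y\cap\widehat Z$), using $\cl_{\widehat{\Mat}}(T)\cap Q=\cl_{\Mat}(T)$ and a rank sandwich; the paper invokes the pseudomodularity inequality without spelling this out, so your version is a welcome tightening of a step the original treats as implicit.
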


\begin{proof}
Let $\mathcal{N}=(Q',r)$ be a full algebraic matroid that is an extension of $\Mat$. 
Let $X',Y',Z'$ be the closures in $\mathcal{N}$ of $X,Y,Z$. By Theorem~\ref{th:amp}, $\mathcal{N}$ is pseudomodular and satisfies
\[r(X'\cap Y'\cap Z')\geq r(X'\cap Y')-1\geq r(X\cap Y)-1>r(A)\]
Therefore, there is a $p\in Q'$ that, in $\mathcal{N}$ lies in the closures of $X$, $Y$ and $Z$ that is not in the closure of $A$. Then, we can extend $\Mat$ with $p$, and the resulting matroid is still algebraic.
\end{proof}

From Proposition~\ref{pr:pseu}, we get a tool to use in finding non-algebraic matroids. Given a matroid $\Mat$ and some depth $d>0$, it first checks if $\Mat$ has \emph{pseudotriples} $(X,Y,Z)$, that is, triples of flats satisfying the conditions in Proposition~\ref{pr:pseu}. If so, it looks for a matroid extension guaranteed by the proposition. If no such extension exists, then the matroid is not $d$-pseudomodular, and therefore not algebraic. If there exist such matroid extensions, then it checks if there is at least one of these matroids that is $d-1$-pseudomodular in the same way. This is done recursively for every pseudotriple at every depth until depth 1. 

\subsection{Algorithms for Pseudomodularity}
\label{sec:algorithms}
The algorithms we used in finding non-pseudomodular matroids are shown in this section.

We show how we generate pseudotriples for a matroid. This step is in two parts. First, we generate all triples of flats using Algorithm~\ref{alg:getTriples}, then we validate all generated triples using Algorithm~\ref{alg:validateTriples}, returning only the ones that can be pseudotriples. Note that instead of waiting to get all pseudotriples, it is more efficient to yield each one as it is generated. Since only the first two sets in any pseudotriple are symmetric, we ensure that we do not miss out on any possible pseudotriple by swapping the place of $X$ and $Z$, as well as $Y$ and $Z$ once each.

Next, we use Algorithm~\ref{alg:getExtensions} to find extensions of the given matroid by an element that lies in the intersection of the sets in the pseudotriple. At depth 1, which is the base depth, we check if the matroid is pseudomodular using Algorithm~\ref{alg:baseCheck}. And finally, as the entry point to the program, we have Algorithm~\ref{alg:recursiveCheckPM}.

\begin{algorithm}
\begin{algorithmic}
\caption{Get Pseudotriples}\label{alg:getTriples}
\Function{getPSMTriples}{matroid}
    
    \State $validTriples \gets Array$
    \For{$X, Y, Z$ \textbf{in} $Flats$}
        \If{\Call{validateTriples}{$X,Y,Z$}}   
            \State \textbf{add} $(X,Y,Z)$ \textbf{to} $validTriples$
        \ElsIf{\Call{validateTriples}{$X,Z,Y$}}
            \State \textbf{add} $(X,Z,Y)$ \textbf{to} $validTriples$
        \ElsIf{\Call{validateTriples}{$Z,Y,X$}}
            \State \textbf{add} $(Z,Y,X)$ \textbf{to} $validTriples$
        \EndIf
    \EndFor
    \State \textbf{return} $validTriples$
\EndFunction
\end{algorithmic}
\end{algorithm}

\begin{algorithm}
\begin{algorithmic}
\caption{Validate Pseudotriples}
\label{alg:validateTriples}
\Function{validateTriples}{X,Y,Z}
    
    \If{$X$ covers $X \cap Z$ \textbf{and} $Y$ covers $Y \cap Z$ \textbf{and} $r(X \cap Y) - r(X \cap Y \cap Z) > 1$}
        \State \textbf{return True} 
    \EndIf
    \State \textbf{return False} 
\EndFunction
\end{algorithmic}
\end{algorithm}

\begin{algorithm}
\begin{algorithmic}
\caption{Get Matroid Extensions}
\label{alg:getExtensions}
\Function{getExtensions}{$M$, subsets}

    \State $element \gets \text{String}(M.size())$
    \For{$N$ \textbf{in} $matroidExtensions(element, subsets)$}
        \State \textbf{return} $N$,$\,element$
    \EndFor
\EndFunction
\end{algorithmic}
\end{algorithm}

\begin{algorithm}
\begin{algorithmic}
\caption{Base Pseudomodular Check}
\label{alg:baseCheck}
\Function{baseCheckPSM}{$M$}
    
    \For{$triple$ \textbf{in} \Call{getPSMTriples}{$M$}}
        \State $X,Y,Z \gets$ $triple$
        \State $T \gets$ $X\cap Y\cap Z$
        \If{$T\in$ $modularCut(triple)$}
            \State \textbf{return} \textbf{false}
        \EndIf
    \EndFor
    \State \textbf{return} \textbf{true}
\EndFunction
\end{algorithmic}
\end{algorithm}

\begin{algorithm}
\begin{algorithmic}[0]
\caption{Main Recursive Function}
\label{alg:recursiveCheckPM}
\Function{recursivePSM}{$M$, depth}
    
    \If{$depth == 1$}
        \State $result \gets$ \Call{baseCheckPSM}{$M$}
        \State \textbf{return} $result$
    \EndIf
    \For{$triple$ \textbf{in} \Call{getPSMTriples}{$M$}}
        \State $X,Y,Z \gets$ $triple$
        \State $T \gets$ $X\cap Y\cap Z$

        \State $isPM \gets$ \textbf{false}
        \For{$N,e$ \textbf{in} \Call{getExtensions}{$M$, $triple$}}
            \If{$N.rank(T) \not=N.rank(Te)$ }
                \If{\Call{recursivePSM}{$N, depth-1$}}
                    \State $isPM \gets$ \textbf{true}
                    \State \textbf{break}
                \EndIf
            \EndIf
        \EndFor
        \If{$isPM$ \textbf{is false}}
            \State \textbf{return false}
        \EndIf
    \EndFor
    \State \textbf{return true}
\EndFunction
\end{algorithmic}
\end{algorithm}
}


\begin{thebibliography}{11}

\bibitem{AhKo77}
Ahlswede, R., and K{\"o}rner, J.:
On the connection between the entropies of input and output
distributions of discrete memoryless channels. 
{\em Proceedings of the 5th Brasov Conference on
Probability Theory, Brasov, 1974}.
Editura Academiei, Bucuresti, 13-–23 (1977).

\bibitem{AhKo06}
Ahlswede, R., and K{\"o}rner, J.:
Appendix: On common information and related characteristics of correlated information sources.
\textit{General Theory of Information Transfer and Combinatorics.}
664--677. Springer, Berlin Heidelberg (2006).

\bibitem{AlHo95}
Alfter, M., and Hochst\"attler, W.: 
On pseudomodular matroids and adjoints.
{\em Discrete Applied Mathematics} 60, 3--11 (1995).

\bibitem{BaWa89}
Bachem, A., and Wanka, A.:
Euclidean intersection properties.
{\em Journal of Combinatorial Theory, Series B} 47, 10--19 (1989).

\bibitem{Bam21}
Bamiloshin, M.:
Common information techniques for the study of matroid representation and secret sharing schemes. Universitat Rovira i Virgili.
\emph{PhD thesis}. (2021).

\bibitem{BBFP21}
Bamiloshin, M., Ben-Efraim, A., Farr\`as, O., and Padr\'o, C.:
Common information, matroid representation, and secret sharing for matroid ports.
\emph{Designs, Codes and Cryptography} 89, 143--166 (2021).

\fullv{
\bibitem{BaFa24}
Bamiloshin, M., and Farr\`as, O.:
Optimizing extension techniques for discovering non-algebraic matroids.
 \emph{Journal of Algebraic Combinatorics} 62, 50 (2025).
}

\confv{
\bibitem{BaFa24}
Bamiloshin, M., and Farr\`as, O.:
Optimizing extension techniques for discovering non-algebraic matroids.
arXiv:2406.18359 (2024).
}
	
\bibitem{BFP23}
Bamiloshin, M., Farr\`as, O., and Padr\'o, C.:
A note on extension properties and representations of matroids.
\emph{Discrete Applied Mathematics} 376, 270--280 (2025).
	

\bibitem{Bei11}
Beimel, A.:
Secret-sharing schemes: A survey.
{\em International conference on coding and cryptology}, 11--46 (2011).

\fullv{
\bibitem{BFM25}
Beimel, A., Farr\`as, O., Moya, A.:
Polynomial Secret Sharing Schemes and Algebraic Matroids.
To appear in {\em TCC} 2025. Full version available at {\em Cryptology ePrint Archive}, Report 2025/368 (2025).
}

\bibitem{BLP08}
Beimel, A., Livne, N., and Padr\'o, C.:
Matroids can be far from ideal secret sharing.
\textit{Fifth Theory of Cryptography Conference, TCC 2008,
    Lecture Notes in Comput.\ Sci.}
4948 194--212 (2008).

\bibitem{Ben16}
Ben-Efraim, A.:
Secret-sharing matroids need not be algebraic.
\newblock {\em Discrete Mathematics} 339(8), 2136--2145, (2016).

\bibitem{BjLo86}
Bj\"orner, A., and Lov\'asz, L.:
Pseudomodular lattices and continuous matroids.
{\em Acta Sci. Math.} 51, 295--308 (1987).

\bibitem{Bollen18}
Bollen, G.P.:
Frobenius flocks and algebraicity of matroids. Eindhoven: Technische Universiteit Eindhoven. \emph{PhD thesis}. (2018).

\bibitem{BrDa91}
Brickell, E.F., and Davenport, D.M.:
On the classification of ideal secret sharing schemes. 
{\em J. Cryptology}  4, 123--134 (1991).

\bibitem{Csi97}
Csirmaz, L.:
The size of a share must be large.
{\em J. Cryptology} 10, 223--231 (1997).

\bibitem{DFZ07}
Dougherty, R., Freiling, C., and Zeger, K.:
Networks, matroids, and non-Shannon information inequalities.
{\em IEEE Trans. Inform. Theory} 53(6), 1949--1969 (2007). 
 
\bibitem{DrLo87}
Dress, A., and Lov\'asz, L.:
On some combinatorial properties of algebraic matroids.
{\em Combinatorica} 7(1), 39--48 (1987).

\bibitem{ESG10}
El Rouayheb, S., Sprintson, A., and Georghiades, C.:
On the index coding problem and its relation to network coding and matroid theory.
{\em IEEE Trans. Inf. Theory} 56(7), 3187--3195, (2010).

\bibitem{FKMP20}
Farr\`as, O., Kaced, T., Mart\'{\i}n, S., and Padr\'o, C.:
Improving the linear programming technique in the search for lower bounds in secret sharing.
{\em IEEE Trans. Inf. Theory} 66(11), 7088--7100 (2020).

\bibitem{Fuj78}
Fujishige, S.:
Polymatroidal dependence structure of a set of random variables.
{\em Information and Control} 39, 55--72 (1978).

 
\bibitem{Gurpinar22}
G\"urpinar, E:
Symmetries in linear programming for information inequalities.
{\em ISIT} 2022, 760--765.

\bibitem{GuRo19}
G\"urpinar, E., and Romashchenko, A.E.:
How to use undiscovered information inequalities: direct applications of the Copy Lemma. 
{\em ISIT} 2019, 1377--1381.

\bibitem{Ing71}
Ingleton, A.W.:
Representation of matroids.
\textit{Combinatorial Mathematics and its Applications\/}, D.J.A Welsh (ed.), 149--167. Academic Press, London (1971).

\bibitem{InMa75}
Ingleton, A. W., and Main, R. A.:
Non-algebraic matroids exist. 
{\em Bull. London Math. Soc.,} 7, 144--146, (1975).


\bibitem{Lindstrom83}
Lindstr{\"o}m, B.:
\newblock The non-Pappus matroid is algebraic.
\newblock {\em Ars Combin.} 16(B), 95--96 (1983).

 
\bibitem{Lindstrom1988}
Lindstr\"om, B.:
A generalization of the Ingleton-Main lemma and a class of non-algebraic matroids.
{\em Combinatorica} 8(1), 87--90 (1988).

\bibitem{Matus99}
Mat\'u\v{s}, F.:
Matroid representations by partitions.
{\em Discrete Mathematics} 203, 169--194 (1999).

\bibitem{Matus18}
Mat\'u\v{s}, F.:
Classes of matroids closed under minors and principal extensions. 
{\em Combinatorica} 38, 935--954 (2018)
 
\bibitem{Matus24}
Mat\'u\v{s}, F.:
Algebraic matroids are almost entropic.
{\em Proceedings of the American Mathematical Society} 152(01), 1--6 (2024).

\bibitem{MaRo08}
Mayhew, D., and Royle, G.F.:
Matroids with nine elements.
{\em J. Combin. Theory Ser. B} 98, 415--431 (2008).

\bibitem{MaWe13}
Mayhew, D., and Welsh, D.:
On the number of sparse paving matroids. 
{\em Adv. Appl. Math.} 50(1), 125--131 (2013).

\bibitem{MNWW11}
Mayhew, D., Newman, M., Whittle, G., and Welsh, D.:
On the asymptotic proportion of connected matroids. 
{\em European J. Combin.} 32, 882--890 (2011).

 
\bibitem{Oxley11}
Oxley, J.G:
Matroid theory. Second edition.
{\em Oxford Science Publications,
The Clarendon Press, Oxford University Press,
New York} (2011).

\bibitem{Pad12}
Padr\'o, C.:
Lecture Notes in secret sharing.
{\em Cryptology ePrint Archive, Report} 2012/674 (2012).

\bibitem{PeVZ13}
Pendavingh, R., and van Zwam, S.H.M.:
Skew partial fields, multilinear representations of matroids, and a matrix tree theorem.
\emph{Advances in Applied Mathematics} 50(1), 201--227 (2013).
 
\bibitem{PeVdP14}
Pendavingh, R., and van der Pol, J.G.:
On the number of matroids compared to the number of sparse paving matroids.
\emph{Electron. J. Comb.} 22, (2014).

\bibitem{Sey92}
Seymour, P.D.:
On secret-sharing matroids.
{\em J. Combin. Theory Ser. B} 56, 69--73 (1992).

\bibitem{SiAs98}
Simonis, J., and Ashikhmin, A.:
\newblock Almost affine codes.
\newblock {\em Designs, Codes, and Cryptography} 14(2), 179--197 (1998).

\bibitem{SHL08}
Sun, Q., Ho, S. T., and Li, S. Y. R.:
On network matroids and linear network codes.
{\em ISIT} 2008, 1833--1837.

\end{thebibliography}
\end{document}